            \newcommand{\marginalnote}[1]{}
\theoremstyle{plain}
\newtheorem{Thm}{Theorem}[section]
\newtheorem{Lem}[Thm]{Lemma}
\newtheorem*{Thm*}{Theorem}
\theoremstyle{remark}
\theoremstyle{definition}
\newtheorem{Ex}[Thm]{Example}
\newtheorem{Que}[Thm]{Question}
\patchcmd{\epigraph}{\@epitext{#1}}{\itshape\@epitext{#1}}{}{}
\begin{document}
\title{Three-dimensional maps and subgroup growth}
\author{R\'emi Bottinelli, Laura Ciobanu, Alexander Kolpakov}
\date{\today}

\begin{abstract}
In this paper we derive a generating series for the number of cellular complexes known as pavings or three-dimensional maps, on $n$ darts, thus solving an analogue of Tutte's problem in dimension three. 

The generating series we derive also counts free subgroups of index $n$ in $\Delta^+ = \mathbb{Z}_2*\mathbb{Z}_2*\mathbb{Z}_2$ via a simple bijection between pavings and finite index subgroups which can be deduced from the action of $\Delta^+$ on the cosets of a given subgroup. We then show that this generating series is non-holonomic. Furthermore, we provide and study the generating series for isomorphism classes of pavings, which correspond to conjugacy classes of free subgroups of finite index in $\Delta^+$.  

Computational experiments performed with software designed by the authors provide some statistics about the topology and combinatorics of pavings on $n\leq 16$ darts. 

\medskip

\noindent 2010 Mathematics Subject Classification: 14N10, 20E07, 20H10, 05E45, 33C20.

\medskip

\noindent Key words: map, paving, growth series, subgroup growth, free product, free group.
\end{abstract}

\maketitle

\section{Introduction}

In this note we explore the correspondence between the number of rooted three-dimensional maps, or pavings, on $n$ darts, as introduced in \cite{AK, Lienhardt, Spehner}, and free subgroups of given index $n$ in the free product $\Delta^+ = \mathbb{Z}_2* \mathbb{Z}_2* \mathbb{Z}_2$, in order to obtain generating series, new formulas and asymptotics for these objects. For any surface or higher-dimensional manifold that has been triangulated or otherwise subdivided into cells (not necessarily simplices), combinatorial maps are a way of recording the neighbouring relations between cells (vertices, edges, faces, etc), such as incidence or adjacency. The number of \textit{darts} (defined in Sections~\ref{s:2-d-maps} and \ref{s:3-d-maps}), which are essentially edges or half-edges, is for us the key parameter in quantifying the number of maps, and can be seen as an ``elementary particle'' from which the combinatorial objects in this paper are assembled. 

There is a natural way to associate with every free subgroup of index $n$ in $\Delta^+$ a paving on $n$ darts, and we give new quantitative information, as well as examples with concrete computations, for both kinds of objects, the geometric ones and the algebraic ones. We also count the conjugacy classes of free subgroups of index $n$ in $\Delta^+$, and investigate the link between these and isomorphism classes of pavings. 

While similar connections between free subgroups (and their conjugacy classes) of finite index in certain Fuchsian triangle groups and two-dimensional maps have been previously exploited by a number of authors (\cite{Breda-Mednykh-Nedela, Jones-Singerman, M, MN1, MN2, Petitot-Vidal, Stothers-modular, Stothers, Vidal}), relatively little has been done for maps in $3$ dimensions; this paper is a step towards developing the theory and computation in higher dimensions. In particular, this paper provides a solution to the analogue of Tutte's problem (enumeration of isomorphisms classes of maps and hypermaps) in dimension $3$. 

Also, three-dimensional maps, or pavings, closely resemble the ``edge coloured graphs'' (as des\-cri\-bed by Gurau in \cite{Gurau}) used in order to study random tensors and associated tensor integrals, which can be viewed as a generalisation of matrix integrals related to counting maps and hypermaps in dimension $2$. Therefore, pavings can be viewed as a first step in quantifying Gurau's approach.

General subgroup growth is the subject of the book \cite{LS}, and further information on subgroup growth in free products of cyclic groups can be found in \cite{BPR, Mul91, Mul96, MSP1, MSP2, MSP3, Stothers}. There, the general theory of subgroup structure in free products of (finite and infinite) cyclic groups is enhanced by using the methods of representation theory, analytic number theory and probability theory, among other tools.

The novelty of our contribution is in the methods we use, which have not been employed for counting pavings before and which are particularly suitable for practical computations, as well as in the qualitative information about the generating series we obtain, such as the fact that they are non-holonomic.
We use the species theory initiated by Joyal \cite{Joyal} (c.f. the monographs \cite{Bergeron-et-al, Flajolet-Sedgewick}) as our main computational tool, which allows us to derive the exponential generating series for the number of rooted pavings in Theorem \ref{thm:rooted-pavings} (or free subgroups of finite index in Theorem \ref{thm:subgroups}) and the number of isomorphism classes of connected pavings in Theorem \ref{thm:pavings} (or conjugacy classes of said subgroups in Theorem \ref{thm:conjugacy-classes}) in a relatively simple form suitable for routine calculation and computer experiments. 
We are able to associate the generating series for the number of rooted pavings with solutions of the classical Riccati equations, which shows they are non-holonomic by a result of \cite{Klazar}. Further connections between map enumeration and the Riccati equation were established in \cite{AB}. 

Throughout the paper we give several concrete and illustrative examples, and in Section \ref{sec:stats} we provide some statistical information about pavings on $n\leq 16$ darts using a computer program \textrm{Nem} \cite{Nem} created for the purpose of their enumeration and classification. 

\begin{small}
\section*{Acknowledgements}
The authors gratefully acknowledge the support received from the University of Neuch\^atel Overhead grant no.~12.8/U.01851. R.B. and A.K. were also supported by FN~PP00P2-144681/1, and L.C. was supported by  FN~PP00P2-170560 projects of the Swiss National Science Foundation. The authors greatly appreciate the fruitful discussions with the On-line Encyclopaedia of Integer Sequences  -- OEIS  community, who helped identify integer sequences in this manuscript and improve its exposition. The computations were performed using the ``Cervino'' computational cluster of the Computer Science Department at the University of Neuch\^atel. 
\end{small}

\section{Preliminaries}

\subsection{Two-dimensional maps}\label{s:2-d-maps}

A two-dimensional oriented combinatorial map or, simply, a combinatorial map, is a triple $H = \langle D; \alpha, \sigma \rangle$, where $D = \{ 1, 2, \dots, n \}$ is a finite set of $n\geq 0$ \textit{darts} (to be defined below), $\alpha$, $\sigma \in \mathfrak{S}_n$ are permutations of $D$, and $\alpha$ is an involution. A map $H$ is \textit{connected} if the group $G_H = \langle \alpha, \sigma \rangle$ acts transitively on $D$.  

Any combinatorial map has a topological realisation $\Gamma_H$ as a disjoint union of connected graphs, each embedded into a connected orientable surface. In order to construct $\Gamma_H$, one may proceed as follows. Let $\phi = \sigma^{-1} \alpha$, and for each cycle of $\phi$ consider a polygon, called a \textit{face} of $\Gamma_H$, whose edges are oriented anticlockwise. 
Two edges $i$ and $j$ of the newly produced faces are identified in accordance with the transpositions of $\alpha$, that is, if $\alpha(i)=j$ then $i$ is identified with $j$, and each new edge becomes the union of the now two half-edges or \textit{darts} $i$ and $j$, pointing in opposite directions (and each towards a vertex). This ensures that the resulting topological space $\Gamma_H$ is orientable. 
The ordered sequence of darts pointing towards a vertex of $\Gamma_H$ is now described by a suitable cycle of $\sigma$. Thus the vertices of $\Gamma_H$ correspond to the disjoint cycles of $\sigma$.

By construction, the topological space that we obtain after performing the procedure above is an oriented surface without boundary, which is connected if $G_H$ acts transitively on $D$. However, we do not always assume connectivity/transitivity.    

The above argument establishes a bijection between combinatorial maps and topological maps, i.e. graphs embedded into orientable (possible disconnected) surfaces, where for each connected component $\langle \Sigma_g; \Gamma, \iota \rangle$ with $\Sigma_g$ a genus $g$ surface, and $\Gamma$ embedded in $\Sigma_g$ via the map $\iota$, the complement $\Sigma_g\setminus \iota(\Gamma)$ is a union of topological discs. Each edge of such a $\Gamma$ is split into a pair of labelled half-edges pointing in opposite directions. The darts $D$ are exactly those oriented half-edges.

The permutations $\alpha$, $\sigma$ and $\phi = \sigma^{-1} \alpha$ defining $H$ can be read off the labelled topological map $\Gamma_H$ as follows:
\begin{itemize}
\item[1)] the cycles of $\alpha$ correspond to the darts forming entire edges of $\Gamma_H$,
\item[2)] the cycles of $\sigma$ correspond to the sequences of darts around vertices read in an anticlockwise direction,
\item[3)] the cycles of $\phi$ correspond to the sequences of darts obtained by moving around faces in an anticlockwise direction.
\end{itemize}

Two combinatorial maps $H_1 = \langle D; \alpha_1, \sigma_1 \rangle$ and $H_2 = \langle D; \alpha_2, \sigma_2 \rangle$ are isomorphic if there exists $\pi \in \mathfrak{S}_n$ such that $\alpha_1 = \pi^{-1}\, \alpha_2\, \pi$ and $\sigma_1 = \pi^{-1}\,\sigma_2\, \pi$, which for the associated topological maps translates into the existence of an orientation-preserving homeomorphism between $\Gamma_{H_1}$ and $\Gamma_{H_2}$ that respects dart adjacencies. 

For any permutations $\pi_i \in \mathfrak{S}_n$, $i = 1, \dots, l$, let $\zeta(\pi_1, \dots, \pi_l)$ be the number of orbits of the group $\langle \pi_1, \dots, \pi_l \rangle$ acting on $D = \{1, 2, \dots, n\}$. Then the connected components of $H = \langle D; \alpha, \sigma \rangle$  are represented by the orbits of $\langle \alpha, \sigma \rangle$, the faces of $H$ are the orbits of $\langle \sigma^{-1} \alpha \rangle$, and its edges and vertices are the orbits of $\langle \alpha \rangle$ and $\langle \sigma \rangle$, respectively. Thus the Euler characteristic of $H$ can be defined as $\chi(H) = \zeta(\sigma^{-1} \alpha) - \zeta(\alpha) + \zeta(\sigma)$.   

\subsection{Three-dimensional maps}\label{s:3-d-maps}

A three-dimensional oriented combinatorial map or, simply, a (combinatorial) \textit{paving}, is a quadruple $P = \langle D; \alpha, \sigma, \varphi \rangle$, where $D$ is an $n$-element set ($n\geq 0$) and $\alpha, \sigma, \varphi \in \mathfrak{S}_n$ are permutations of $D$ such that $H = \langle D; \alpha, \sigma \rangle$ is a map (not necessarily connected), and:
\begin{itemize}
\item[(I-1)] the product $\alpha \varphi$ is an involution,
\item[(I-2)] the product $\varphi \sigma^{-1}$ is an involution,
\item[(FP)] neither of the above involutions has fixed points.
\end{itemize}

A paving $P$ is connected if $G_P = \langle \alpha, \sigma, \varphi \rangle$ acts transitively on $D$. Given a paving $P = \langle D; \alpha, \sigma, \varphi \rangle$, the map $H = \langle D; \alpha, \sigma \rangle$ is called the \textit{underlying map} of $P$. 

We may also think of $P$ as a quadruple $P = \langle D; \alpha, \beta, \gamma \rangle$, where $D$ is an $n$-element set ($n\geq 0$) of darts and $\alpha, \beta, \gamma \in \mathfrak{S}_n$ are involutions without fixed points. In this case it is easy to see that letting $\varphi = \alpha \beta$ and $\sigma = \gamma \alpha \beta $ produces the initial definition. 

As in the case of two-dimensional maps, a combinatorial paving $P$ has a topological realisation $M_P$ obtained as follows. Let $H_P = \langle D; \alpha, \sigma \rangle$ be the underlying map for a paving $P = \langle D; \alpha, \sigma, \varphi \rangle$, and let us realise each connected component of $H$ as a topological map, i.e. view $H_P$ as a collection of surfaces $\Sigma^i$ with embedded graphs $\Gamma^i$, $i=1, 2, \dots, m$, having labelled half-edges as described in Section~\ref{s:2-d-maps}. Each surface $\Sigma^i$ represents the boundary of a handle-body $B^i$, and then the handle-bodies $B^i$ become identified along their boundaries in order to produce a labelled oriented cellular complex representing $P$ topologically. Indeed, the faces of $\Sigma^i$'s defined by the permutation $\sigma^{-1} \alpha$ are identified in accordance with the permutation $\varphi$, and the conditions I-1, I-2, FP ensure that one face cannot be identified to multiple disjoint counterparts  (implied by conditions I-1 and I-2), and edges or faces cannot bend onto themselves (implied by condition FP). Also, conditions I-1 and I-2 ensure that the faces of two disjoint handle-bodies come together with coherent orientations, thus resulting in an orientable topological space $M_P$.  However, we'd like to note that $M_P$ is not always a three-dimensional manifold. Such an example can be delivered by Thurston's figure-eight glueing from \cite[Ch.~1, p.~4]{Thurston-notes}, described in Example \ref{ExThurston} (c.f. \cite[\S 10.3]{Ratcliffe} for a more detailed description). 

A paving $P$ is \textit{rooted} if one of its darts is singled out as a \textit{root} dart. In the sequel, we shall always assume that the root dart has label $1$. 

The definitions of isomorphism for combinatorial and topological pavings are absolutely analogous to those for combinatorial and topological maps.

\begin{Ex}\label{ExThurston}
Let $D = \{ 1, 2, \dots, 12 \} \cup \{-1, -2, \ldots, -12  \}$ be a set. Let $\alpha$, $\sigma$ and $\varphi$ be the following permutations of $D$:
\begin{equation*}
\alpha = (1,2)(3,4)(5,6)(7,8)(9,10)(11,12)(-1,-2)(-3,-4)(-5,-6)(-7,-8)(-9,-10)(-11,-12),
\end{equation*}
\begin{equation*}
\sigma = (1,5,3)(2,9,8)(4,11,10)(6,7,12)(-1,-10,-7)(-2,-6,-4)(-3,-12,-9)(-5,-8,-11),
\end{equation*}
\begin{equation*}
\varphi = (1,-11,3,-2,12,-4)(2,-3,11,-1,4,-12)(5,-5,8,-9,9,-8)(6,-7,10,-10,7,-6).
\end{equation*}

\begin{figure}[ht]
\includegraphics*[scale=0.5]{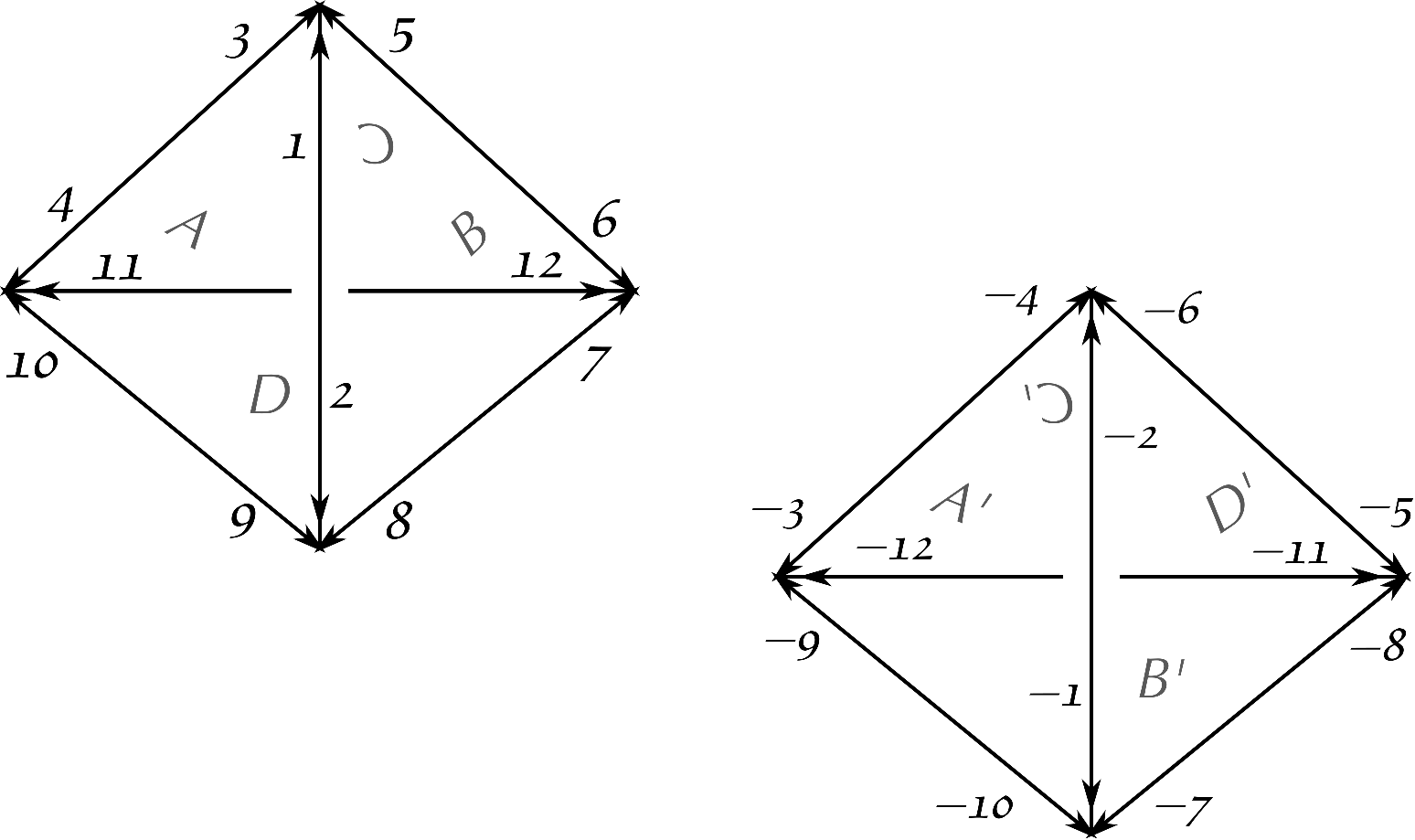}
\caption{Two tetrahedra used in Thurston's figure-eight glueing. Here, they do not need to be geometrically realisable. Each face labelled $X$ glued to the respective face labelled $X'$, while the darts match as described by $\phi$.}\label{fig:Th-map}
\end{figure}

Consider the paving $P= \langle D; \alpha, \sigma, \varphi \rangle$, whose underlying map $H_P = \langle D; \alpha, \sigma \rangle$ consists of the two tetrahedra depicted in Figure~\ref{fig:Th-map}. The face glueing is described by $\phi = \alpha \varphi$, while $\varphi$ describes the equivalence classes of the edges (or the so-called edge cycles, c.f. \cite[\S 10.1]{Ratcliffe}) under the glueing. We have 
\begin{equation*}
\phi = \left(  \begin{array}{cccc}
\overbrace{1\,\,\,\,\,\,\,\,\,\, 4\,\,\,\,\,\,\,\,\, 9}^{A}& \overbrace{7\,\,\,\,\,\,\,\,\,\, 5\,\,\,\,\,\,\,\,\, 2}^{B}& \overbrace{3\,\,\,\,\,\,\,\,\,\, 11\,\,\,\,\,\,\,\,\,\, 6}^{C}& \overbrace{8\,\,\,\,\,\,\,\,\,   10\,\,\,\,\,\,\,\,\,  12}^{D} \\
\underbrace{-3 -2 -10}_{A'}& \underbrace{-9 -7 -11}_{B'}& \underbrace{-12\,\, -4\,\, -5}_{C'}& \underbrace{-6\,\,\, -8\,\,\, -1}_{D'}
\end{array} \right).
\end{equation*}

After performing the necessary identifications, we obtain a cellular space with Euler characteristic $+1$, which has two $3$-cells, four $2$-cells, two $1$-cells, and a single $0$-cell. Therefore one does not obtain a manifold, since $3$-manifolds have zero Euler characteristic \cite[Theorem 4.3]{FM}.  

Indeed, the link of $P$ can be computed as a map $L_P = \langle D;  \varphi^{-1} \sigma, \sigma^{-1} \rangle$, c.f. \cite[Proposition 4.1]{Spehner}. We obtain that its Euler characteristic $\chi(L_P) = \xi(\sigma \varphi^{-1} \sigma) - \xi(\varphi^{-1}\sigma) + \xi(\sigma^{-1}) = 4 - 12 + 8 = 0$, which means that $L_P$ is indeed a torus. 
\end{Ex}

The approach to pavings described above is largely due to Spehner, c.f. \cite{Spehner}. Another, effectively dual, approach is due to Arqu\`{e}s and Koch \cite{AK}, and these two approaches to pavings are shown to be equivalent by Lienhardt in \cite{Lienhardt}.  

Arqu\`{e}s and Koch's approach is as follows. Let $P = \langle D; \alpha, \sigma, \varphi \rangle$ be a combinatorial paving. Then we assemble an oriented cellular complex $M_P$ in such a way that the underlying map $H_P = \langle D; \alpha, \sigma \rangle$ produces (possibly disjoint) links of vertices in $M_P$. Each link is a map whose edges are intersections of the two-dimensional angular segments (or, simply, labelled corners of its two-faces \cite[D\'{e}finition 2.2, 1) \& 2)]{AK}, c.f. discussion in \cite[p.~71]{Lienhardt}) representing the darts $D$ and emanating from each vertex, with the respective link surface. In this case, the latter should be thought of as the boundary of a sufficiently small neighbourhood of said vertex. Then $\varphi$  brings angular segments belonging to the same two-cell of $M_P$ together, which finalises the construction. One may also take $H^*_P = \langle D; \varphi^{-1} \sigma, \sigma^{-1} \rangle$ as the underlying map and perform Spehner's construction as previously described (with the 
r\^{o}les of the associated permutations described in \cite[Proposition 4.1]{Spehner}, passing to the dual map). Finally, $M_P$ is a topological presentation for $P$.

In the rest of the paper we follow Spehner's approach, in which the vertex links $L_P$ of a paving $P = \langle D; \alpha, \sigma, \varphi \rangle$ are described by the underlying map of its dual $P^*$. Namely, we have $L_P = H_{P^*} = \langle D; \varphi^{-1} \sigma, \sigma^{-1} \rangle$, c.f. \cite[Proposition 4.1]{Spehner}. Thus, given a paving $P$, computing its links $L_P$ is straightforward.

Every paving $P$ gives rise to an oriented pseudo-manifold $M_P$ as its topological presentation, as follows from its definition. A paving $P$ with $M_P$ an oriented three-manifold has all links homeomorphic to the sphere $\mathbb{S}^2$. If $L_P$ contains some connected components of genus higher than zero, we excise the non-spherical links from $M_P$ and obtain a manifold with boundary. 

For a paving $P = \langle D; \alpha, \sigma, \varphi \rangle$, let the number of connected components of its underlying map $H_P = \langle D; \alpha, \sigma \rangle$ be $f_3 := \zeta(\alpha, \sigma)$, which is also the number of connected three-dimensional handlebodies constituting $M_P$, or the number of ``pieces,'' as described in \cite[Definition~1.5]{Spehner}. The number of two-dimensional faces of $P$ is $f_2 := \zeta(\sigma^{-1}\alpha, \varphi^{-1}\sigma)$, the number of edges is $f_1 := \zeta(\alpha, \varphi)$ and that of vertices is $f_0 := \zeta(\sigma, \varphi)$.  

The $\mathrm{f}$-vector of $P$ is $\mathrm{f}(P) := (f_0, f_1, f_2, f_3)$. The complexity of $P$ equals $\mathrm{c}(P) = f_3 - f_2 + f_1 - f_0$. In general, this quantity does not coincide with the Euler characteristic of $P$, unless the underlying map $H$ is planar (i.e. all the connected components of $H$ are spheres). 

\subsection{Formal power series}\label{s:species-series} 

Here we follow \cite{CK-2017}. A \textit{hypergeometric sequence} $(c_k)_{k\geq 0}$ has $c_0=1$ and enjoys the property that the ratio of its any two consecutive terms is a rational function in $k$, i.e. there exist monic polynomials $P(k)$ and $Q(k)$ such that $$\frac{c_{k+1}}{c_k}=\frac{P(k)}{Q(k)}.$$

Moreover, if $P$ and $Q$ are factored as $$\frac{P(k)}{Q(k)}=\frac{(k+a_1)(k+a_2)\dots(k+a_p)}{(k+b_1)(k+b_2)\dots(k+b_q)(k+1)},$$
then we use the notation 
$$ {}_{p}F_{q}\left[ \begin{array}{c} a_1 \dots a_p\\ b_1 \dots b_q 
\end{array}; z \right]$$ for the formal series $F(z)=\sum_{k\geq 0} c_kz^k$, c.f. \cite[\S 3.2]{Zeilberger-et-al}. Here, the factor $(k+1)$ belongs to the denominator for historical reasons. Such a hypergeometric series satisfies the differential equation
\begin{equation}\label{HGSdiffeq}
\Big(\vartheta(\vartheta+b_{1}-1)\cdots(\vartheta+b_{q}-1)-z(\vartheta+a_{1})%
\cdots(\vartheta+a_{p})\Big)\,\, {}_{p}F_{q}(z) = 0,
\end{equation}
where $\vartheta=z\frac{d}{dz}$, c.f. \cite[\S 16.8(ii)]{DLMF}. Among numerous differential equations related to (\ref{HGSdiffeq}) is the \textit{classical Riccati equation}, which plays an important r\^{o}le later on. It is a first order non-linear equation with variable coefficients $f_i(x)$, of the form 
\begin{equation}\label{Riccati2}
\frac{\mathrm{d}y}{\mathrm{d}x}= f_1(x) + f_2(x) y + f_3(x) y^2.
\end{equation}

The \textit{Pocchammer symbol} is connected to hypergeometric series and defined as $$(a)_n=a(a+1)\dots (a+n-1).$$ As $n \rightarrow \infty$, it has the following asymptotic expansion
\begin{equation}\label{Pocchammer}
(a)_n \propto \frac{\sqrt{2\pi}}{\Gamma(a)}\, e^{-n}\, n^{a + n - \frac{1}{2}},
\end{equation}
where $\Gamma(a)$ is the Gamma function of $a$, defined as $\Gamma(a)=(a-1)!$ for $a$ a positive integer, and $\Gamma(a)= \int_0^{\infty} x^{a-1}e^{-x} dx$ for all the non-integer real positive numbers.

A formal power series $y=f(x)$ is called \textit{D-finite}, or \textit{differentiably finite}, or \textit{holonomic}, if there exist polynomials $p_0, \dots, p_m$ (not all zero) such that $p_m(x)y^{(m)}+ \dots +p_0(x)y=0$, where $y^{(m)}$ denotes the $m$-th derivative of $y$ with respect to $x$. All algebraic power series are holonomic, but not vice versa, c.f. \cite[Appendix B.4]{Flajolet-Sedgewick}.

Finally, we recall that the \textit{Hadamard product} of two formal single-variable series $A(z)=\sum_{n\geq 0} a_n \frac{z^n}{n!}$ and $B(z)=\sum_{n\geq 0} b_n \frac{z^n}{n!}$ is denoted $(A \odot B)(z)$ and given by $(A \odot B)(z):=\sum_{n\geq 0} a_n b_n \frac{z^n}{n!}$.

Let $\lambda = (n_1, \dots, n_m)$ be a partition of a natural number $n\geq 0$, i.e. $n = \sum_{i\geq 1}\, i n_i$. We write $\lambda \vdash n$ and define $\lambda ! := 1^{n_1} n_1! 2^{n_2} n_2! \dots m^{n_m} n_m!$. Let $\mathbf{z}^{\lambda}: = z_1^{n_1} z_2^{n_2} \dots z_m^{n_m}$ for some collection of variables $z_1$, $z_2$, $\dots, z_m$. Then for two multi-variable series $A(\mathbf{z}) = \sum_{n\geq 0} \sum_{\lambda \vdash n} a_\lambda \frac{\mathbf{z}^\lambda}{\lambda !}$ and $B(\mathbf{z}) = \sum_{n\geq 0} \sum_{\lambda \vdash n} b_\lambda \frac{\mathbf{z}^\lambda}{\lambda !}$ the Hadamard product is $(A\odot B)(\mathbf{z}) := \sum_{n\geq 0} \sum_{\lambda \vdash n} a_\lambda b_\lambda \frac{\mathbf{z}^\lambda}{\lambda!}$.

Also, for a multiple Hadamard product of a series $A(\mathbf{z})$ with itself, i.e. $B(\mathbf{z}) = (A \odot \dots \odot A) (\mathbf{z})$, we shall write $B(\mathbf{z}) = A^{\odot n}(\mathbf{z})$, with a suitable $n\geq 0$.

\subsection{Species theory}\label{s:species-theory}

Species theory (th\'{e}orie des esp\`{e}ces), initially due to A. Joyal \cite{Joyal}, is a powerful way to describe and count labelled discrete structures. Since it requires a lengthy and formal setup, we give here only the basic ideas and refer the reader to \cite{Bergeron-et-al, Flajolet-Sedgewick} for further details.

A \textit{species of structures} is a rule (or functor) $F$ which produces

\begin{itemize}
\item[i)] for each finite set $U$ (of labels), a finite set $F[U]$ of structures on $U$,
\item[ii)] for each bijection $\sigma: U\rightarrow V$, a function $F[\sigma]: F[U] \rightarrow F[V]$.
\end{itemize} 
 
The functions $F[\sigma]$ should further satisfy the following functorial properties:

\begin{itemize}
\item[i)] for all bijections $\sigma:U \rightarrow V$ and $\tau:V \rightarrow W$, 
$F[\tau \circ \sigma] = F[\tau]\circ F[\sigma]$,
\item[ii)] for the identity map $Id_U : U \rightarrow U$, $F[Id_U] = Id_{F[U]}$.
\end{itemize}

Let $[n] = \{1,2,\dots,n\}$ be an $n$-element set, and assume that $[0] = \emptyset$. A species $F$ of \textit{labelled structures} has exponential generating function $F(z) = \sum_{n \geq 0} \mathrm{card}\,F[n] \frac{z^n}{n!}$, where $\mathrm{card}\,F[n]$ denotes the size of $F[n]$. 

For a species of \textit{unlabelled structures} (i.e. structures up to isomorphism) we write $\widetilde{F}$, and its generating function is a specialisation of the cycle index series, in the sense that $\widetilde{F}(z)=\mathcal{Z}_F(z,z^2, z^3 \dots)$, where the \textit{cycle index series} (see \cite[\S 1.2.3]{Bergeron-et-al}) is defined as: $$ \mathcal{Z}_F(z_1,z_2, \dots) = \sum_{n\geq 0} \frac{1}{n!} \sum_{\sigma \in \mathfrak{S}_n} \mathrm{card}\, Fix(F[\sigma])\, \mathbf{z}^\sigma.$$ Here $Fix(F[\sigma])$ is the set of elements of $F[n]$ having $F[\sigma]$ as automorphism, and $\mathbf{z}^\sigma = z_1^{c_1} z_2^{c_2} \dots z_m^{c_m}$ if the cycle type of $\sigma$ is $c(\sigma) = (c_1, c_2, \dots, c_m)$ (i.e. $c_k$ is the number of cycles of length $k$ in the decomposition of $\sigma$ into disjoint cycles). 

\begin{Ex}
This example illustrates the difference between the exponential generating function $S(z)$ for a species of labelled structures and the generating function $\widetilde{S}(z)$ for the corresponding species of unlabelled structures in the setting of permutations. 
Let $[n] = \{1, 2, \dots, n\}$ be a finite set, $S[n]$ the species of all permutations $Sym(n)$ of $n$ distinct numbers (which are considered labelled structures), and $\widetilde{S}[n]$ the species of unlabelled structures. Then $\widetilde{S}[n]$ consists of all conjugacy classes of permutations in $Sym(n)$, and classical counting arguments give:
\begin{equation*}
S(z) = \sum_{n \geq 0} \mathrm{card}\, \mathcal{S}[n] \cdot \frac{z^n}{n!} = \sum_{n\geq 0} n! \cdot \frac{z^n}{n!}  = \frac{1}{1-z},
\end{equation*}
while 
\begin{equation*}
\widetilde{S}(z) = \sum_{n \geq 0} p(n)\, z^n = \prod_{n\geq 1} \frac{1}{1-z^n},
\end{equation*}
where $p(n)$ is the number of unordered partitions of $n \geq 0$\footnote{C.f. sequence A000041 in the OEIS \cite{OEIS}.}, with $p(0) = 1$, c.f. \cite[Exemple 9]{Bergeron-et-al}.
\end{Ex}

\section{Maps and subgroups}\label{s:maps-groups}

Let $\mathcal{P}(n)$ be the set of connected pavings on $n$ darts, and let $\mathcal{P}_r(n)$ be the set of connected rooted pavings on $n$ darts.
We will assume that if pavings are rooted they have root $1$. Let $P = \langle D; \alpha, \beta, \gamma \rangle$ be a rooted paving from $\mathcal{P}_r(n)$. Then there is an epimorphism $\psi$ from $\Delta^+ = \mathbb{Z}_2*\mathbb{Z}_2*\mathbb{Z}_2 \cong \langle a | a^2 = \varepsilon \rangle * \langle b | b^2 = \varepsilon \rangle * \langle c | c^2 = \varepsilon \rangle$ onto the group $G_P = \langle \alpha, \beta, \gamma \rangle \subset \mathfrak{S}_n$ given by $\psi: a \mapsto \alpha, b \mapsto \beta, c \mapsto \gamma$. Moreover, $\Delta^+$ acts transitively on $D$ via this epimorphism, since the action of $G_P$ is transitive. By taking $\Gamma := Stab(1)$ with respect to this action, we observe that the action of $\Delta^+$ on $D$ is isomorphic to the action of $\Delta^+$ on the set of cosets $\Delta^+\diagdown \Gamma$. 

If we consider the isomorphism class of $P$ or, equivalently, consider $P \in \mathcal{P}_r(n)$ as a representative from $\mathcal{P}(n)$,  a change of root in $P$ from $1$ to $i$ corresponds to conjugation of $\Gamma$ by an element $w \in \Delta^+$ such that $\omega = \psi(w)$ has the property $\omega(1) = i$. 

By an argument analogous to that of \cite[Lemmas 3.1-3.2]{CK-2017}, the following hold.

\begin{Lem}\label{lemma:rooted-pavings-subgroups}
There exists a bijection between the set $\mathcal{P}_r(n)$ of rooted connected pavings with $n$ darts and the set of free subgroups of index $n$ in $\Delta^+ = \mathbb{Z}_2*\mathbb{Z}_2*\mathbb{Z}_2$.
\end{Lem}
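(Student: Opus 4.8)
The plan is to construct the bijection explicitly in both directions and verify that the two constructions are mutually inverse, following the blueprint already laid out in the paragraph preceding the statement. First I would take a rooted connected paving $P = \langle D; \alpha, \beta, \gamma \rangle \in \mathcal{P}_r(n)$ and, as indicated, form the epimorphism $\psi \colon \Delta^+ \to G_P = \langle \alpha, \beta, \gamma \rangle$ sending $a \mapsto \alpha$, $b \mapsto \beta$, $c \mapsto \gamma$. Since $\alpha, \beta, \gamma$ are fixed-point-free involutions, $\psi$ is well defined on $\Delta^+ = \mathbb{Z}_2 * \mathbb{Z}_2 * \mathbb{Z}_2$. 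Because $G_P$ acts transitively on $D = \{1, \dots, n\}$, the composite action of $\Delta^+$ on $D$ is transitive, and I would set $\Gamma := \mathrm{Stab}(1)$, the stabiliser of the root. The orbit-stabiliser theorem then gives $[\Delta^+ : \Gamma] = |D| = n$, so the associated subgroup has the correct index.

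The substantive point is that $\Gamma$ must be \emph{free}, and I expect this to be the main obstacle. The standard tool here is the Kurosh subgroup theorem (or the theory of subgroups of free products, as developed for the modular group and triangle groups in the references \cite{Mul91, Stothers} cited in the introduction): any subgroup of $\Delta^+ = \mathbb{Z}_2 * \mathbb{Z}_2 * \mathbb{Z}_2$ is itself a free product of a free group with copies of the free factors $\mathbb{Z}_2$. Thus $\Gamma$ is free precisely when it is torsion-free, i.e. contains no conjugate of $a$, $b$, or $c$. I would show this is equivalent to the fixed-point-freeness condition (FP): an element such as a conjugate $w a w^{-1}$ lies in $\Gamma = \mathrm{Stab}(1)$ exactly when the involution $\psi(w)\,\alpha\,\psi(w)^{-1}$ fixes the point $1$, and since this conjugate of $\alpha$ is again a fixed-point-free involution on $D$, it fixes no point at all. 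The same argument applies to $\beta$ and $\gamma$ (and, via the relations $\varphi = \alpha\beta$ and the involution conditions I-1, I-2, to the other generators appearing in Spehner's formulation). Hence $\Gamma$ contains no conjugate of a generator and is torsion-free, therefore free.

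For the reverse direction I would start from a free subgroup $\Gamma \leq \Delta^+$ of index $n$ and recover a rooted paving from the coset action. Let $\Delta^+$ act on the $n$ left cosets $\Delta^+ \diagdown \Gamma$ by left multiplication; identifying these cosets with $D = \{1, \dots, n\}$ so that the trivial coset $\Gamma$ is labelled $1$ gives a transitive permutation representation $\rho \colon \Delta^+ \to \mathfrak{S}_n$. Setting $\alpha := \rho(a)$, $\beta := \rho(b)$, $\gamma := \rho(c)$ yields three involutions generating a transitive subgroup of $\mathfrak{S}_n$, so $P := \langle D; \alpha, \beta, \gamma \rangle$ is a connected structure rooted at $1$. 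The freeness of $\Gamma$ guarantees, by the converse of the torsion argument above, that each of $\alpha, \beta, \gamma$ is fixed-point-free: if, say, $\alpha$ fixed some coset $w\Gamma$, then $w^{-1} a w \in \Gamma$ would be a nontrivial torsion element, contradicting freeness. This establishes condition (FP), and with $\varphi = \alpha\beta$, $\sigma = \gamma\alpha\beta$ one checks directly that $\alpha\varphi = \alpha\alpha\beta = \beta$ and $\varphi\sigma^{-1} = \alpha\beta(\gamma\alpha\beta)^{-1} = \gamma^{-1} = \gamma$ are the required fixed-point-free involutions, verifying (I-1) and (I-2). Hence $P \in \mathcal{P}_r(n)$.

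Finally I would confirm that the two maps are mutually inverse. Starting from $P$, building $\Gamma = \mathrm{Stab}(1)$, and then forming the coset action returns the original permutation representation up to the canonical identification of $D$ with $\Delta^+ \diagdown \Gamma$ sending the root to the trivial coset; conversely, reconstructing $\Gamma$ as the stabiliser of $1$ in the coset action recovers the subgroup we began with, since the stabiliser of the trivial coset under left multiplication is exactly $\Gamma$. Rooting at the fixed label $1$ is precisely what rigidifies the correspondence and removes the ambiguity that would otherwise arise from relabelling, so no quotient by $\mathfrak{S}_n$-conjugation is needed on either side. This matches the situation in \cite[Lemmas 3.1--3.2]{CK-2017}, and I would remark that the only genuinely new ingredient relative to that reference is checking that the three fixed-point-free involutions here translate correctly into torsion-freeness of $\Gamma$ inside $\mathbb{Z}_2 * \mathbb{Z}_2 * \mathbb{Z}_2$ rather than inside the two-generator groups treated there.
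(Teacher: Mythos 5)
Your proof is correct and follows essentially the same route as the paper: the paper's own argument is precisely the stabiliser-of-the-root / coset-action correspondence sketched in the paragraph preceding the lemma, with the remaining details (torsion-freeness of $\Gamma$ being equivalent to fixed-point-freeness of the three involutions, via the Kurosh subgroup theorem, and the inverse construction from the coset action) delegated to the analogous Lemmas 3.1--3.2 of \cite{CK-2017}. You have written out in full exactly what the paper cites, including the rigidity observation that root-preserving relabellings act freely, so there is nothing to correct.
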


\begin{Lem}\label{lemma:pavings-conjugacy-classes}
There exists a bijection between the set $\mathcal{P}(n)$ of isomorphisms classes of connected pavings with $n$ darts and the set of conjugacy classes of free subgroups of index $n$ in $\Delta^+ = \mathbb{Z}_2*\mathbb{Z}_2*\mathbb{Z}_2$.
\end{Lem}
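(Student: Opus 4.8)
The plan is to deduce this statement from Lemma~\ref{lemma:rooted-pavings-subgroups} by passing to quotients on both sides: isomorphism classes of connected pavings are the orbits of rooted pavings under change of root, while conjugacy classes are the orbits of subgroups under conjugation, and these two equivalence relations correspond under the rooted bijection. Concretely, I would define a map $\Phi$ from $\mathcal{P}(n)$ to the set of conjugacy classes of free index-$n$ subgroups of $\Delta^+$ as follows: given an isomorphism class of connected pavings, pick a labelled representative $P = \langle D; \alpha, \beta, \gamma \rangle$ on $D = \{1, \dots, n\}$, form the epimorphism $\psi: a \mapsto \alpha,\, b \mapsto \beta,\, c \mapsto \gamma$ onto $G_P$, and let $\Phi$ of that class be the conjugacy class of $\Gamma = \mathrm{Stab}(1)$, which is a free subgroup of index $n$ by Lemma~\ref{lemma:rooted-pavings-subgroups}.

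First I would check that $\Phi$ is well defined, i.e. independent of the labelled representative. Any two representatives of the same isomorphism class differ by some $\pi \in \mathfrak{S}_n$, with $P_2 = \pi \cdot P_1$ meaning $\alpha_2 = \pi \alpha_1 \pi^{-1}$ and similarly for $\beta, \gamma$. Using $\psi_2(w) = \pi\, \psi_1(w)\, \pi^{-1}$, a direct computation gives $\mathrm{Stab}_{P_2}(1) = \mathrm{Stab}_{P_1}(j)$ with $j = \pi^{-1}(1)$; and since the $P_1$-action is transitive, $\mathrm{Stab}_{P_1}(j)$ is conjugate to $\mathrm{Stab}_{P_1}(1)$ by any $w$ with $\psi_1(w)(1) = j$. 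This is exactly the change-of-root-equals-conjugation principle recorded just before Lemma~\ref{lemma:rooted-pavings-subgroups}, so $\Phi$ lands in a single conjugacy class. Surjectivity of $\Phi$ is then immediate from Lemma~\ref{lemma:rooted-pavings-subgroups}: every free index-$n$ subgroup arises as $\mathrm{Stab}(1)$ of some rooted paving, whose isomorphism class maps to its conjugacy class.

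The crux will be injectivity, that is, reconstructing a paving isomorphism out of a conjugacy of subgroups. Suppose $\Gamma_1 = \mathrm{Stab}_{P_1}(1)$ and $\Gamma_2 = \mathrm{Stab}_{P_2}(1)$ satisfy $\Gamma_2 = w\,\Gamma_1\,w^{-1}$. Setting $i = \psi_1(w)(1)$, the same principle gives $\Gamma_2 = \mathrm{Stab}_{P_1}(i)$. Thus $\{1, \dots, n\}$ with the $P_1$-action and basepoint $i$, and $\{1, \dots, n\}$ with the $P_2$-action and basepoint $1$, are two transitive pointed $\Delta^+$-sets with the same point-stabiliser $\Gamma_2$; hence each is canonically isomorphic to the coset $\Delta^+$-set $\Delta^+ \diagdown \Gamma_2$, and composing these identifications yields a unique $\Delta^+$-equivariant bijection $\pi: \{1,\dots,n\} \to \{1,\dots,n\}$ with $\pi(i) = 1$. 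Equivariance $\pi \circ \psi_1(g) = \psi_2(g) \circ \pi$ for every generator $g \in \{a,b,c\}$ reads $\alpha_2 = \pi \alpha_1 \pi^{-1}$, $\beta_2 = \pi \beta_1 \pi^{-1}$, $\gamma_2 = \pi \gamma_1 \pi^{-1}$, and hence also $\sigma_2 = \pi \sigma_1 \pi^{-1}$ and $\varphi_2 = \pi \varphi_1 \pi^{-1}$ for the derived permutations $\varphi = \alpha\beta$, $\sigma = \gamma\alpha\beta$; this is precisely an isomorphism $P_1 \cong P_2$. Therefore $\Phi$ is injective, and hence a bijection.

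I expect the main obstacle to be exactly this last step — the passage from \emph{the subgroups are conjugate} back to \emph{the pavings are isomorphic} — since, unlike the forward (well-definedness) direction, it requires producing the relabelling $\pi$ from scratch out of the $\Delta^+$-set structure, and one must verify that this single equivariant bijection conjugates $\alpha$, $\beta$ and $\gamma$ (equivalently $\alpha$, $\sigma$, $\varphi$) \emph{simultaneously}. A minor point to keep track of throughout is that \emph{rooted paving} in Lemma~\ref{lemma:rooted-pavings-subgroups} is implicitly taken up to root-preserving isomorphism, so that its correspondence with honest subgroups (rather than with conjugacy classes) is already clean before one takes quotients.
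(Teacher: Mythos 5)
Your proposal is correct and takes essentially the same route as the paper, which only sketches the argument (deferring to the analogous Lemmas 3.1--3.2 of \cite{CK-2017}): descend the rooted bijection of Lemma~\ref{lemma:rooted-pavings-subgroups} to quotients via the principle, recorded just before that lemma, that a change of root corresponds to conjugation of the stabiliser $\Gamma = Stab(1)$ inside $\Delta^+$. Your explicit reconstruction of the relabelling $\pi$ from the coset $\Delta^+$-set in the injectivity step, and your caveat that rooted pavings are implicitly taken up to root-preserving relabelling, are precisely the details the paper leaves implicit.
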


\section{Counting rooted pavings}\label{s:subgroups}

In this section we shall count the number of transitive triples $\langle \alpha, \beta, \gamma \rangle \subset \mathfrak{S}_n$ such that $\alpha$, $\beta$ and $\gamma$ are involutions without fixed points. Let $S_2$ be the species of such fixed-point-free involutions in $\mathfrak{S}_n$. Then since pavings correspond to triples of such involutions, for the species $P^{*}$ of labelled pavings (not necessarily connected) on $n$ darts we have
\begin{equation}\label{eqn:species-1}
P^{*} = S_2 \times S_2 \times S_2,
\end{equation}
while the species $P$ of labelled connected pavings on $n$ darts is related to $P^{*}$ by the Hurwitz equation
\begin{equation}\label{eqn:species-2}
P^{*} = E(P),
\end{equation}
where $E$ represents the species of sets.
The species $P^\circ$ of rooted connected pavings on $n$ darts can be expressed in terms of the derivative of $P$ as
\begin{equation}\label{eqn:species-3}
P^\circ = Z\cdot P^\prime,
\end{equation}
where $Z$ is the singleton species with exponential generating function $Z(z) = z$.

The above relations between species can be translated into relations between the corresponding exponential and ordinary generating functions. 

Since the generating function for $E$ is $\exp(z)$ and the direct product of species translates into the Hadamard product of series, the exponential generating functions for $S_2$, $P^*$ and $P$ are given by

\begin{equation}\label{eqn:gf-s2}
S_2(z) = \sum^\infty_{k=0} \frac{z^{2k}}{2^k k!},
\end{equation}
\begin{equation}\label{eqn:gf-p-ast}
P^*(z) = S_2(z) \odot S_2(z) \odot S_2(z) = \sum^\infty_{k=0} \frac{((2k)!)^2}{2^{3k} (k!)^3}\, z^{2k},
\end{equation}
\begin{equation}\label{eqn:gf-p}
P(z) = \log P^*(z) = \log \left( \sum^\infty_{k=0} \frac{((2k)!)^2}{2^{3k} (k!)^3}\, z^{2k} \right).
\end{equation}

The ordinary generating function for the number of rooted connected pavings with $n$ darts coincides with $P^\circ(z)$ since the species $P^\circ$ is rigid and every root assignment corresponds to $(n-1)!$ non-isomorphic labellings of the remaining darts:
\begin{equation}\label{eqn:ogf-p-circ}
P^\circ(z) = z\,\, \frac{d}{dz} \log P^*(z) = z\,\, \frac{d}{dz} \log \left( \sum^\infty_{k=0} \frac{((2k)!)^2}{2^{3k} (k!)^3}\, z^{2k} \right).
\end{equation}

Now let us write $P^*(z) = f(2z^2)$, where $f(x) = \sum^\infty_{k=0} \frac{f_k}{k!} x^k$ and $f_k = \frac{1}{2^{4k}}\, \left( \frac{(2k)!}{k!} \right)^2$. Then
\begin{equation}\label{eqn:hypergeom-1}
\frac{f_{k+1}}{f_k} = \left( k + \frac{1}{2} \right)^2.
\end{equation}
Combining equality \eqref{eqn:hypergeom-1} with the fact that $f(0) = P^*(0) = 1$, we obtain that the function $f(x)$ is hypergeometric, can be written as
\begin{equation}\label{eqn:hypergeom-2}
f(x) = {}_{2}F_{0}\left( \begin{array}{cc}
\frac{1}{2},& \frac{1}{2}\\
\cdots
\end{array}; x \right), 
\end{equation}
 and is represented by an everywhere divergent (i.e. convergent only at $z = 0$) series. As a formal series, $f(x)$ satisfies 
\begin{equation}\label{eqn:hypergeom-3}
\vartheta f(x) = x\, \left( \vartheta + \frac{1}{2} \right)^2 \, f(x),
\end{equation}
where $\vartheta = x\, \frac{d}{dx}$. c.f. \cite[Section 16.8(ii)]{DLMF}.
From equality \eqref{eqn:ogf-p-circ} we get that 
\begin{equation}\label{eqn:w}
P^{\circ}(z) = 2x\, \frac{f'(x)}{f(x)} = 2 w(x),
\end{equation}
and by combining \eqref{eqn:hypergeom-3} and \eqref{eqn:w} we see that $w(x)$ satisfies a Riccati type equation:
\begin{equation}\label{eqn:riccati}
w'(x) = \frac{(1-x) w(x) - x w^2(x) - \frac{1}{4} x}{x^2}.
\end{equation}
By \cite[Theorem 5.2]{Klazar} the function $w(x)$ is not holonomic, and therefore neither is $P^{\circ}(z)$. 

\begin{Thm}\label{thm:rooted-pavings}
The generating series $P^\circ(z) = \sum^\infty_{n=0} pav_r(n)\, z^{n}$ for the number $pav_r(n)$ of connected oriented rooted pavings with $n$ darts is non-holonomic. Its general term $pav_r(n)$ vanishes for odd values of $n$ and its asymptotic behaviour for even values of $n$ is:
\begin{equation*}
pav_r(2k) \sim 2\, \sqrt{\frac{2}{\pi}}\, \left( \frac{2}{e} \right)^k\, k^{k + 1/2}.
\end{equation*}
\end{Thm}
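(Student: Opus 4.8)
The statement bundles three claims, which I would treat separately and in order of increasing difficulty. Non-holonomicity is essentially already established in the discussion preceding the theorem: \eqref{eqn:riccati} presents $w(x)$ as a solution of a classical Riccati equation, so \cite[Theorem 5.2]{Klazar} applies, and since $P^\circ(z)=2\,w(2z^2)$ by \eqref{eqn:w} depends on $w$ through the algebraic substitution $x=2z^2$, non-holonomicity of $w$ is inherited by $P^\circ$ (holonomy being preserved under algebraic substitution, c.f. \cite[Appendix B.4]{Flajolet-Sedgewick}). The vanishing of $pav_r(n)$ at odd $n$ is immediate from parity: $P^*(z)$ is an even series by \eqref{eqn:gf-p-ast}, hence so is $\log P^*(z)$, and the operator $z\frac{d}{dz}$ preserves the set of exponents, so $P^\circ(z)=z\frac{d}{dz}\log P^*(z)$ in \eqref{eqn:ogf-p-circ} carries only even powers of $z$.

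The heart of the matter is the even-index asymptotic. Write $a_k:=[z^{2k}]P^*(z)=\frac{((2k)!)^2}{2^{3k}(k!)^3}$ and $b_k:=pav_r(2k)=[z^{2k}]P^\circ(z)$. From \eqref{eqn:hypergeom-2} one has the compact form $a_k=2^k\,\frac{\big((\tfrac12)_k\big)^2}{k!}$, so the Pochhammer asymptotic \eqref{Pocchammer} (with $\Gamma(\tfrac12)=\sqrt{\pi}$), combined with Stirling for $k!$, yields
\[ a_k\ \sim\ \sqrt{\frac{2}{\pi}}\,\Big(\frac{2}{e}\Big)^{k}\,k^{\,k-1/2}. \]
The target is therefore equivalent to the single relation $b_k\sim 2k\,a_k$. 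I note for later use that these coefficients grow super-exponentially: $\frac{a_k}{a_{k-1}}=\frac{(2k-1)^2}{2k}\to\infty$, and in particular $a_m\ge m\,a_{m-1}$ for all $m\ge 2$.

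To extract $b_n$ I would clear denominators in \eqref{eqn:ogf-p-circ}, rewriting it as $P^*(z)\,P^\circ(z)=z\frac{d}{dz}P^*(z)$, and compare coefficients of $z^{2n}$. Since $P^\circ(0)=0$ this gives the recursion
\[ b_n\ =\ 2n\,a_n-\sum_{k=1}^{n-1} a_{n-k}\,b_k. \]
All $a_k,b_k$ are nonnegative (the $b_k$ count pavings), so a one-line induction gives $0\le b_k\le 2k\,a_k$. Everything now reduces to showing that the correction $\Sigma_n:=\sum_{k=1}^{n-1}a_{n-k}b_k$ is $o(n\,a_n)$. Using $b_k\le 2k\,a_k$ together with the symmetry $a_k a_{n-k}=a_{n-k}a_k$ — which turns $\sum_k k\,a_k a_{n-k}$ into $\tfrac n2\sum_k a_k a_{n-k}$ — I obtain the clean bound $\Sigma_n\le n\,a_n\,U_n$ with
\[ U_n:=\sum_{k=1}^{n-1}\frac{a_k\,a_{n-k}}{a_n}, \]
so it suffices to prove $U_n\to 0$.

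This last estimate is the crux, and it is exactly where the factorial-type growth of $a_k$ does the work. From $a_m\ge m\,a_{m-1}$ one gets $a_{n-k}/a_n\le (n-k)!/n!$, and by the $k\leftrightarrow n-k$ symmetry it is enough to control $k\le n/2$; there $(n-k)!/n!\le (2/n)^k$ and $a_k\le\prod_{m=1}^k 2m=2^k k!$, so each summand is at most $k!\,(4/n)^k$. The resulting sum tends to $0$ (its head decays geometrically and is dominated by the $k=1$ term of size $O(1/n)$, while the terms near $k=n/2$ are exponentially small), whence $U_n\to 0$, $\Sigma_n=o(n\,a_n)$, and $b_n=2n\,a_n\,(1+o(1))$. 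Substituting the asymptotic for $a_n$ and renaming $n\to k$ gives precisely $pav_r(2k)\sim 2\sqrt{2/\pi}\,(2/e)^k k^{k+1/2}$. The main obstacle is this convolution bound: conceptually it asserts that asymptotically almost every paving on $2k$ darts is connected, and the delicate point is organizing the split so that both the slowly decaying head and the negligible central and near-endpoint terms of $\sum_k k!(4/n)^k$ are controlled simultaneously — no step is deep, but the bookkeeping must be done with care.
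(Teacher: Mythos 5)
Your proposal is correct, and its first two parts coincide with the paper's own argument: non-holonomicity comes from the Riccati equation \eqref{eqn:riccati} together with \cite[Theorem 5.2]{Klazar} (the paper asserts the transfer from $w$ to $P^\circ$ without comment; your closure-under-algebraic-substitution justification is the right way to make it precise), and the vanishing at odd indices is the same parity observation. Where you genuinely diverge is the asymptotic estimate, which is the heart of the theorem. The paper obtains the key relation (in your notation, $b_k \sim 2k\,a_k$) by citation: writing $f(x)=\sum_k f_k x^k/k!$ and $\log f(x)=\sum_k g_k x^k$, it invokes \cite[Theorem 4.1]{Drmota-Nedela} (going back to Bender \cite{Bender}, cf.\ also \cite[Theorem~7.2]{Odlyzhko}) to conclude $g_k \sim f_k/k!$, then finishes exactly as you do, via the Pochhammer asymptotics \eqref{Pocchammer} and Stirling. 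You instead reprove that transfer result from scratch in this special case: the recursion $b_n = 2n\,a_n - \sum_{k=1}^{n-1} a_{n-k}b_k$ extracted from $P^*(z)P^\circ(z)=z\frac{d}{dz}P^*(z)$, the bound $b_k\le 2k\,a_k$ from nonnegativity, the symmetrization of the convolution, and the estimate $U_n\to 0$. I checked your inequalities ($a_m\ge m\,a_{m-1}$ for $m\ge 2$ since $(2m-1)^2\ge 2m^2$ there, $a_k\le 2^k k!$, and $(n-k)!/n!\le (2/n)^k$ for $k\le n/2$), as well as the final splitting of $\sum_{k\le n/2}k!\,(4/n)^k$: the terms are log-convex in $k$ (ratio $4(k+1)/n$ increasing), so the sum is controlled by a geometric head of size $O(1/n)$ plus endpoint terms that are exponentially small near $k=n/2$; the argument closes, giving in fact a relative error $O(1/n)$ rather than a bare $o(1)$. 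What the paper's route buys is brevity and generality --- the cited theorem applies verbatim to other species-logarithm situations (it is reused for Theorem \ref{thm:pavings}). What yours buys is a self-contained elementary proof whose combinatorial content is explicit: the convolution bound says precisely that asymptotically almost every paving on $2k$ darts is connected, which is exactly what Bender's theorem encodes.
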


\begin{proof}
The above discussion contains the proof of non-holonomy. It remains to deduce the asymptotic value of $pav_r(2k)$ as $k \rightarrow \infty$. We recall that 
\begin{equation}
pav_r(2k) = [z^{2k}]\, P^\circ(z) = [z^{2k}]\, \left( z\,\, \frac{d}{dz} \log P^*(z) \right) =  
\end{equation}
\begin{equation}
= [z^{2k}]\, \left( z\,\, \frac{d}{dz} \log f(2 z^2) \right), 
\end{equation}
where
\begin{equation}
f(x) = \sum^\infty_{k=0} \frac{1}{k!}\, \left( \frac{1}{2} \right)^2_k, 
\end{equation}
according to equality \eqref{eqn:hypergeom-2}. 

Let $f(x) = \sum^\infty_{k=0} \frac{f_k}{k!} x^k$ (necessarily with $f_0 = 1$) and let $\log f(x) = \sum^\infty_{k=1} g_k x^k$. Then by \cite[Theorem 4.1]{Drmota-Nedela} (also c.f. \cite{Bender} and \cite[Theorem~7.2]{Odlyzhko}), we get that $g_k \sim \frac{f_k}{k!}$, as $k \rightarrow \infty$. 

Thus, according to the above computation
\begin{equation}
pav_r(2k) = 2^k\cdot 2k\cdot g_k \sim \frac{2^{k+1}}{(k-1)!}\, \left( \frac{1}{2} \right)^2_k.
\end{equation}

Recalling the asymptotic behaviour of the Pocchammer symbol $(a)_k$ from \eqref{Pocchammer} and Stirling's asymptotic formula $k! \sim \sqrt{2\pi k}\, e^{-k}\, k^k$ we obtain the desired asymptotic expression for $pav_r(2k)$ as $k\rightarrow \infty$. 
\end{proof}

More general asymptotic formulas for subgroup growth in free products of finite groups are given in \cite{Mul91, Mul96}, which imply the asymptotic formula in our case. 

\begin{Ex}\label{ex:rooted-pavings}
Since the generating series $P^\circ(z)$ (up to a multiple of $2$) satisfies the Riccati equation \eqref{eqn:riccati}, we obtain a recurrence relation by substituting $P^\circ(z) = \sum^\infty_{n=0} pav_r(n)\, z^{n}$ in it and equating the general term to zero:
\begin{equation}\label{eqn:recurrence42}
pav_{2n+2} = 2 (n+1)\,pav_n + \sum^{n}_{i=0} pav_{2i}\, pav_{2n-2i}, \mbox{ for } n\geq 2,
\end{equation}
with initial conditions $pav_0 = 0$, $pav_2 = 1$ and $pav_d = 0$ for all odd numbers $d\geq 1$. A similar relation is obtained in \cite[Formula 9]{Stothers}.

In order to perform our computations, a \texttt{SageMath} \cite{SageMath} worksheet \texttt{Monty} \cite{Monty} was created. With its help we found that $P^\circ(z) = z^2 + 4 z^4 + 25 z^6 + 208 z^8 + 2146 z^{10} + 26368 z^{12} + 375733 z^{14} + 6092032 z^{16} + 110769550 z^{18} + 2232792064 z^{20} + 49426061818 z^{22} + 1192151302144 z^{24} + \dots$. The coefficient sequence of $P^\circ(z)$ has index A005411 in the OEIS \cite{OEIS}. Moreover, \eqref{eqn:recurrence42} identifies it as the $S(2, -4, 1)$ self-convolutive sequence from \cite{Martin}.
\end{Ex}

By Lemma \ref{lemma:rooted-pavings-subgroups}, the above theorem can be reformulated in group-theoretic language:

\begin{Thm}\label{thm:subgroups}
The growth series $S_f(z) = \sum^\infty_{n=0} s_f(n)\, z^n$ for the number $s_f(n)$ of free subgroups of index $n$ in $\Delta^+ = \mathbb{Z}_2*\mathbb{Z}_2*\mathbb{Z}_2$ coincides with the series $P^\circ(z)$ from Theorem~\ref{thm:rooted-pavings}.
\end{Thm}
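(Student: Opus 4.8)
The plan is to deduce the statement directly from the dart-level bijection already established, so that the equality of generating series reduces to a coefficient-by-coefficient identity. First I would recall that Lemma~\ref{lemma:rooted-pavings-subgroups} provides, for each fixed $n$, a bijection between the set $\mathcal{P}_r(n)$ of rooted connected pavings on $n$ darts and the set of free subgroups of index $n$ in $\Delta^+ = \mathbb{Z}_2*\mathbb{Z}_2*\mathbb{Z}_2$. Since a bijection between finite sets preserves cardinality, this immediately yields $s_f(n) = pav_r(n) = \mathrm{card}\,\mathcal{P}_r(n)$ for every $n \geq 0$.

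Second, I would assemble these term-by-term equalities into an identity of formal power series. Both $S_f(z) = \sum_{n\geq 0} s_f(n)\,z^n$ and $P^\circ(z) = \sum_{n\geq 0} pav_r(n)\,z^n$ are ordinary generating functions in the same variable $z$, with matching coefficients in every degree; hence they are equal as formal series. In particular, every structural property proved for $P^\circ(z)$ in Theorem~\ref{thm:rooted-pavings} --- non-holonomy and the coefficient asymptotics --- transfers verbatim to $S_f(z)$.

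The only substantive content sits in Lemma~\ref{lemma:rooted-pavings-subgroups}, and this is where the real obstacle would lie were that lemma not already available: one must check that the correspondence $P \mapsto Stab(1)$, arising from the transitive action of $\Delta^+$ on the darts via the epimorphism $\psi$ described in Section~\ref{s:maps-groups}, is well defined; that the stabiliser is genuinely \emph{free} (torsion-freeness forcing freeness in a free product of finite cyclic groups); and that rerooting from dart $1$ to dart $i$ corresponds exactly to conjugation of the subgroup, so that \emph{rooted} pavings match \emph{index-$n$} subgroups rather than their conjugacy classes. Granting that lemma, nothing further is required: the theorem is merely a reformulation of Theorem~\ref{thm:rooted-pavings} under the dictionary of Section~\ref{s:maps-groups}, and the proof is complete once the coefficient identity $s_f(n) = pav_r(n)$ is recorded.
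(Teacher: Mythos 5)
Your proposal is correct and follows exactly the paper's route: the paper derives Theorem~\ref{thm:subgroups} as an immediate reformulation of Theorem~\ref{thm:rooted-pavings} via the bijection of Lemma~\ref{lemma:rooted-pavings-subgroups}, giving $s_f(n) = pav_r(n)$ for all $n$ and hence equality of the generating series. Your additional remarks on what the lemma itself requires (well-definedness of $P \mapsto Stab(1)$, freeness of the stabiliser, rerooting versus conjugation) correctly identify where the substantive content lies, and match the paper's treatment in Section~\ref{s:maps-groups}.
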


\section{Counting pavings up to isomorphism}\label{s:conj-classes}

In order to compute the generating series $\widetilde{P}(z) = \sum^\infty_{n=0} pav(n)\, z^n$ for the number $pav(n)$ of non-isomorphic connected pavings with $n$ darts, we shall employ again the species equations \eqref{eqn:species-1}--\eqref{eqn:species-3}, while replacing generating functions for the respective species with their cycle index series.

Let $C_2$ be the species of transpositions from $\mathfrak{S}_n$, $n\geq 1$. Its cycle index series can be easily expressed as $\mathcal{Z}_{C_2}(z_1, z_2, \dots) = \frac{1}{2} z^2_1 + \frac{1}{2} z_2$. The species $S_2$ of fixed-point-free involutions in $\mathfrak{S}_n$ can be expressed as $S_2 = E(C_2)$, since every involution is formed by a set of transpositions. It's also known that $\mathcal{Z}_E(z_1, z_2, \dots) = \exp \left( \sum^\infty_{n=1} \frac{z_n}{n} \right)$.

Therefore, by using \cite[\S 1.4, Th\'{e}or\`{e}me 2 (c)]{Bergeron-et-al}, the cycle index series for $S_2$ is
\begin{equation}\label{eqn:z-s2-1}
\mathcal{Z}_{S_2}(z_1, z_2, z_3, \dots) = \mathcal{Z}_E( \mathcal{Z}_{C_2}(z_1, z_2, \dots), \mathcal{Z}_{C_2}(z_2, z_4, \dots), \mathcal{Z}_{C_2}(z_3, z_6, \dots), \dots ) = 
\end{equation} 
\begin{equation}\label{eqn:z-s2-2}
 = \exp \left( \frac{z^2_1}{2} \right)\cdot \exp \left( \frac{z^2_2}{4} + \frac{z_2}{2} \right)\cdot \exp \left( \frac{z^2_3}{6} \right)\cdot \dots = \prod^\infty_{n=1}\, T_n(z_n),
\end{equation}
where
\begin{equation}\label{eqn:z-s2-3}
T_n(z_n) = \exp\left( \frac{z^2_n}{2n} + \frac{z_n}{n} \right) \mbox{ for even $n$, and } T_n(z_n) = \exp\left( \frac{z^2_n}{2n} \right) \mbox{ for odd $n$}.
\end{equation}

Thus the cycle index $\mathcal{Z}_{S_2}$ is separable, and the cycle index $\mathcal{Z}_{P^*}$ can be expressed as
\begin{equation}\label{eqn:z-p-ast}
\mathcal{Z}_{P^*}(z_1, z_2, \dots) = \prod^\infty_{n=1} (T_n\odot T_n\odot T_n)(z_n),
\end{equation}
given that $P^* = S_2 \times S_2 \times S_2$ by equation \eqref{eqn:species-1}.

By employing \cite[\S 1.4, Exercice 9 (c)]{Bergeron-et-al} together with equation \eqref{eqn:species-2}, we obtain the cycle index for the species of pavings:
\begin{equation}\label{eqn:z-p}
\mathcal{Z}_{P}(z_1, z_2, \dots) = \sum^\infty_{n=1}\, \frac{\mu(n)}{n}\, \log \mathcal{Z}_{P^*}(z_1, z_2, \dots).
\end{equation}

It follows from \cite[\S 1.2, Th\'{e}or\`{e}me 8 (b)]{Bergeron-et-al} and equations \eqref{eqn:z-p-ast}--\eqref{eqn:z-p} that the generating series $\widetilde{P}(z)$ is 
\begin{equation}\label{eqn:p-tilde-1}
\widetilde{P}(z) = \mathcal{Z}_{P}(z, z^2, z^3, \dots) = \sum^\infty_{n=1}\, \frac{\mu(n)}{n}\, \log \mathcal{Z}_{P^*}(z, z^2, z^3, \dots) = 
\end{equation}
\begin{equation}\label{eqn:p-tilde-2}
= \sum^\infty_{n=1}\, \frac{\mu(n)}{n}\, \sum^\infty_{k=1} \log (T_n\odot T_n\odot T_n)(z_n)|_{z_n = z^{nk}}.
\end{equation}

\begin{Thm}\label{thm:pavings}
The generating series $\widetilde{P}(z) = \sum^\infty_{n=0} pav(n)\, z^{n}$ for the number $pav(n)$ of connected oriented pavings with $n$ darts is given by formulas \eqref{eqn:p-tilde-1} - \eqref{eqn:p-tilde-2}. Its general term $pav(n)$ vanishes for odd values of $n$ and has the following asymptotic behaviour for even values of $n$:
\begin{equation*}
pav(2k) \sim \sqrt{\frac{2}{\pi}}\, \left( \frac{2}{e} \right)^k\, k^{k - 1/2}.
\end{equation*}
\end{Thm}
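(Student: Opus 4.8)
The plan is to isolate the dominant contribution to $pav(2k)=[z^{2k}]\widetilde{P}(z)$ in the double sum \eqref{eqn:p-tilde-1}--\eqref{eqn:p-tilde-2} and to show that it is exactly the rooted count divided by $2k$. First I would dispose of the parity claim: a paving is a triple of fixed-point-free involutions on $n$ darts (see \eqref{eqn:species-1} and \eqref{eqn:z-s2-3}), which forces $n$ to be even, so every series in sight carries only even-degree terms and $pav(n)=0$ for odd $n$. For the leading term, I would single out the summand with $n=1$ and inner index $1$, namely $\log (T_1\odot T_1\odot T_1)(z)$. Since $T_1(z)=\exp(z^2/2)=S_2(z)$ by \eqref{eqn:gf-s2} and \eqref{eqn:z-s2-3}, this equals $\log P^*(z)$, and by \eqref{eqn:ogf-p-circ} its coefficient is $[z^{2k}]\log P^*(z)=\tfrac{1}{2k}[z^{2k}]P^\circ(z)=\tfrac{pav_r(2k)}{2k}$. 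Thus the whole theorem reduces to showing that this leading term already governs the asymptotics.

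The next step is to bound every other summand. Each term $\tfrac{\mu(n)}{n}\log (T_m\odot T_m\odot T_m)(z^{nm})$ begins in degree $2nm$, so only the finitely many pairs with $nm\le k$ affect $[z^{2k}]$, and the contribution of such a pair is $\tfrac{\mu(n)}{n}\,[w^{q}]\log (T_m\odot T_m\odot T_m)(w)$ with $q=2k/(nm)$. For odd $m$, a direct computation from $T_m(z)=\exp(z^2/(2m))$ and the basis bookkeeping of \eqref{eqn:z-s2-2}--\eqref{eqn:z-s2-3} gives $(T_m\odot T_m\odot T_m)(w)=P^*(\sqrt{m}\,w)$, whence $\log (T_m\odot T_m\odot T_m)(w)=\sum_{j\ge 1}(2m)^j g_j\, w^{2j}$, where $g_j=[z^{2j}]\log P^*(z)/2^j$ is the quantity appearing in the proof of Theorem \ref{thm:rooted-pavings}. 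For even $m$ the extra linear term in \eqref{eqn:z-s2-3} contributes only lower-order odd-degree corrections, and I would invoke \cite[Theorem 4.1]{Drmota-Nedela} (exactly as in Theorem \ref{thm:rooted-pavings}) to conclude that still $[w^{q}]\log (T_m\odot T_m\odot T_m)(w)\sim (2m)^{q/2} g_{q/2}$. In either parity the dominant factor is the super-exponential $g_{k/(nm)}$.

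I would then compare orders of magnitude. Using $g_j\sim \sqrt{2/\pi}\,e^{-j} j^{\,j-1/2}$ (equivalent to the asymptotics established in Theorem \ref{thm:rooted-pavings}), the factor $g_{k/(nm)}$ behaves like $(k/(nm))^{k/(nm)}$, which for every $nm\ge 2$ is smaller than $g_k$, of size $k^{k}$, by a factor decaying faster than any exponential in $k$; the merely exponential prefactors $(2m)^{k/(nm)}$ and $|\mu(n)/n|\le 1$ cannot compensate. Summing the $O(k\log k)$ non-leading contributions therefore yields $o\!\big([z^{2k}]\log P^*(z)\big)$, so that $pav(2k)=\tfrac{pav_r(2k)}{2k}(1+o(1))$. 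Substituting the asymptotic of Theorem \ref{thm:rooted-pavings} then gives $pav(2k)\sim \tfrac{1}{2k}\cdot 2\sqrt{2/\pi}\,(2/e)^k k^{k+1/2}=\sqrt{2/\pi}\,(2/e)^k k^{k-1/2}$, as claimed.

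The main obstacle I anticipate is making the domination uniform and rigorous at the borderline pair $(n,m)=(1,2)$, where $q=k$ is the largest non-leading index, together with the analysis of the even-$m$ series $\log (T_m\odot T_m\odot T_m)$, whose odd-degree part precludes the clean closed form available for odd $m$. Establishing $[w^{q}]\log (T_m\odot T_m\odot T_m)(w)\sim (2m)^{q/2} g_{q/2}$ with error estimates explicit enough to be summed over all admissible pairs is the technical heart of the argument; everything else is comparison of super-exponential against exponential growth.
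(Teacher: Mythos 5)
Your proposal rests on the same reduction as the paper's proof: establish $pav(2k)\sim pav_r(2k)/(2k)$ and then substitute the asymptotics of Theorem~\ref{thm:rooted-pavings}. The difference is in how that reduction is justified. The paper disposes of it in one sentence, citing an argument ``analogous to that of Drmota--Nedela, Section 7.1'', whereas you prove it directly from the double sum \eqref{eqn:p-tilde-1}--\eqref{eqn:p-tilde-2}: the pair $(n,m)=(1,1)$ gives the term $\log P^*(z)$, whose coefficient of $z^{2k}$ is $pav_r(2k)/(2k)$ by \eqref{eqn:ogf-p-circ}, and every pair with $nm\ge 2$ reads off a coefficient of index at most $k$ of a factorially growing series, hence loses super-exponentially to the main term; the $O(k\log k)$ count of interfering pairs is right, and your identity $(T_m\odot T_m\odot T_m)(w)=P^*(\sqrt m\, w)$ for odd $m$ is correct under the paper's normalization of the Hadamard product in the variable $z_m$ (where $z_m^j$ carries weight $m^j j!$). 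In substance this is exactly the kind of argument the paper's citation points to --- the $(1,1)$ term is the contribution of pavings with trivial automorphism --- so your proof is a legitimate, self-contained replacement for the reference.

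One intermediate claim is wrong as stated, though harmlessly so. For even $m$, the linear term in $T_m(w)=\exp\left(w^2/(2m)+w/m\right)$ does not produce ``only lower-order odd-degree corrections'', and the asymptotic $[w^{q}]\log(T_m\odot T_m\odot T_m)(w)\sim (2m)^{q/2}g_{q/2}$ is false. Indeed,
\begin{equation*}
[w^{2j}]\,T_m(w)=\frac{1}{(2m)^j\, j!}\sum_{c\ge 0}\frac{2^c\, j(j-1)\cdots(j-c+1)}{m^c\,(2c)!},
\end{equation*}
and the sum is of order $e^{(1+o(1))\sqrt{2j/m}}$, a stretched exponential which diverges with $j$; after taking the Hadamard cube, the coefficient of $w^{2j}$ in $(T_m\odot T_m\odot T_m)(w)$ --- and hence, by the Bender-type transfer you already invoke, of its logarithm --- exceeds $(2m)^{j}g_{j}$ by a factor $e^{(3+o(1))\sqrt{2j/m}}$. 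The repair is to state this step as an upper bound, $[w^{q}]\log(T_m\odot T_m\odot T_m)(w)\le (2m)^{q/2}g_{q/2}\,e^{O(\sqrt q)}$ (with the analogous bound for the odd-degree coefficients that also appear when $m$ is even), which the same computation delivers. Since $g_{k/2}/g_{k}$ decays like $\left(e/(2k)\right)^{k/2}$, the stretched-exponential factor is absorbed in the super-exponential slack, and your domination argument --- and with it the theorem --- goes through unchanged.
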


\begin{proof}
By an argument analogous to that of \cite[Section 7.1]{Drmota-Nedela}, we obtain $pav(2k) \sim \frac{pav_r(2k)}{2k}$ as $k\rightarrow \infty$. Now the claim follows from Theorem~\ref{thm:rooted-pavings}.
\end{proof}

\begin{Ex}\label{ex:pavings}
By using \texttt{Monty} \cite{Monty}, we computed the initial sequence of coefficients for $\widetilde{P}(z)$ and obtained that $\widetilde{P}(z) = z^2 + 4 z^4 + 11 z^6 + 60 z^8 + 318 z^{10} + 2806 z^{12} + 29359 z^{14} + 396196 z^{16} + 6231794 z^{18} + 112137138 z^{20} + \dots$.  The coefficient sequence of $\widetilde{P}(z)$ has index A002831 in the OEIS \cite{OEIS}, which represents the number of edge-$3$-coloured trivalent multi-graphs\footnote{i.e. with multiple edges} on $2n$ vertices, $n\geq 0$, without loops; let this number be $tri(n)$ and let $\widetilde{G}(z) = \sum_{n\geq 0} tri(n) z^{2n}$. Thus the number of isomorphism classes of transitive triples of fixed-point-free involutions from $\mathfrak{S}_{2n}$ equals both $pav(n)$ (as shown above) and $tri(n)$.  

Indeed, in order to create a labelled (not necessarily connected) edge-$3$-coloured trivalent multi-graph without loops, we need to choose three matchings in the set of $2n$ vertices, which we may think of as a set $V = [2n]$. Each matching will consist of edges of same colour, say red (R), green (G) or blue (B). A matching of some colour $c \in \{R, G, B\}$ is then described as a product $\sigma$ of disjoint transpositions $(i,j)$ corresponding to the two vertices $i$ and $j$ from $V$ joined by an edge. Since there are no loops, each matching has exactly $n$ edges, and $\sigma_c$ has no fixed points. See \cite{Read} for a general approach to enumeration of graphs with ``local restrictions''. 

Let $G^*$ be the species of vertex-labelled edge-$3$-coloured trivalent multigraph without loops, and let $G$ be its connected counterpart. Then $G^*$ can be described as a species of triples of  fixed-point-free involutions $\langle \sigma_R, \sigma_G, \sigma_B \rangle$, so $G^* \cong P^*$ and, subsequently, $G \cong P$, as species. From this isomorphism, we get that, in particular, $\widetilde{G}(z) = \widetilde{P}(z)$ and the coefficient sequence of $\widetilde{P}(z)$ coincides with A002831.
\end{Ex}

\begin{Thm}\label{thm:conjugacy-classes}
The growth series $C_f(z) = \sum^\infty_{n=0} c_f(n)\, z^n$ for the number $c_f(n)$ of conjugacy classes of free subgroups of index $n$ in $\Delta^+ = \mathbb{Z}_2*\mathbb{Z}_2*\mathbb{Z}_2$ coincides with the series $\widetilde{P}(z)$ from Theorem~\ref{thm:pavings}.
\end{Thm}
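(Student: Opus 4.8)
The plan is to establish Theorem~\ref{thm:conjugacy-classes} by invoking the bijection already provided in Lemma~\ref{lemma:pavings-conjugacy-classes}, in direct parallel to how Theorem~\ref{thm:subgroups} follows from Theorem~\ref{thm:rooted-pavings} via Lemma~\ref{lemma:rooted-pavings-subgroups}. First I would recall that Lemma~\ref{lemma:pavings-conjugacy-classes} furnishes an explicit bijection between the set $\mathcal{P}(n)$ of isomorphism classes of connected pavings on $n$ darts and the set of conjugacy classes of free subgroups of index $n$ in $\Delta^+$. Consequently, the cardinalities agree term by term: $c_f(n) = pav(n)$ for every $n \geq 0$.

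From this term-by-term equality of coefficients the identity of the two generating series is immediate. Indeed, $C_f(z) = \sum_{n \geq 0} c_f(n)\, z^n = \sum_{n \geq 0} pav(n)\, z^n = \widetilde{P}(z)$, where the middle equality is precisely the statement $c_f(n) = pav(n)$ and the outer equalities are the definitions of $C_f(z)$ and of $\widetilde{P}(z)$ as given in Theorem~\ref{thm:pavings}. Thus $C_f(z)$ coincides with $\widetilde{P}(z)$ as formal power series, which is exactly the assertion of the theorem.

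The proof is therefore entirely formal once Lemma~\ref{lemma:pavings-conjugacy-classes} is in hand, and there is no genuine obstacle: the entire content has been pushed into the combinatorial-group-theoretic correspondence of the lemma. The one point deserving a word of care is to verify that the bijection of Lemma~\ref{lemma:pavings-conjugacy-classes} respects the dart count on the paving side and the index on the subgroup side, so that it restricts to a bijection at each fixed value of $n$ rather than merely an overall bijection; this is, however, built into the statement of the lemma, since both sides are already indexed by $n$. Granting that, the equality of series holds coefficient-wise and the theorem follows. As a remark, this identification also transports all the asymptotic information of Theorem~\ref{thm:pavings} to the subgroup side, giving $c_f(2k) \sim \sqrt{2/\pi}\,(2/e)^k\, k^{k-1/2}$ for the number of conjugacy classes of free subgroups of index $2k$ in $\Delta^+$.
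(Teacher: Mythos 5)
Your proof is correct and follows exactly the paper's (implicit) argument: Theorem~\ref{thm:conjugacy-classes} is simply the group-theoretic reformulation of Theorem~\ref{thm:pavings} via the bijection of Lemma~\ref{lemma:pavings-conjugacy-classes}, just as Theorem~\ref{thm:subgroups} reformulates Theorem~\ref{thm:rooted-pavings} via Lemma~\ref{lemma:rooted-pavings-subgroups}. Your closing remark that the asymptotics $c_f(2k) \sim \sqrt{2/\pi}\,(2/e)^k\,k^{k-1/2}$ transfer to conjugacy classes of subgroups is also consistent with the paper.
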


\begin{Ex}\label{ex:conjugacy-classes}
Below we present the non-isomorphic pavings with $n\leq 4$ darts, which also provide a classification for all conjugacy classes of free subgroups of index $\leq 4$ in $\Delta^+$ in view of Lemma \ref{lemma:pavings-conjugacy-classes} and the preceding discussion. The corresponding pavings can easily be classified by hand.

The conjugacy growth series for $\Delta^+$ is given in Example~\ref{ex:pavings}. An independent computation with GAP \cite{GAP} by issuing \texttt{LowIndexSubgroupsFPGroup} command gives matching results. We may also use \texttt{FactorCosetAction} command to observe the action of a conjugacy class representative on its cosets. 

\begin{figure}[h]
\includegraphics[scale=.42]{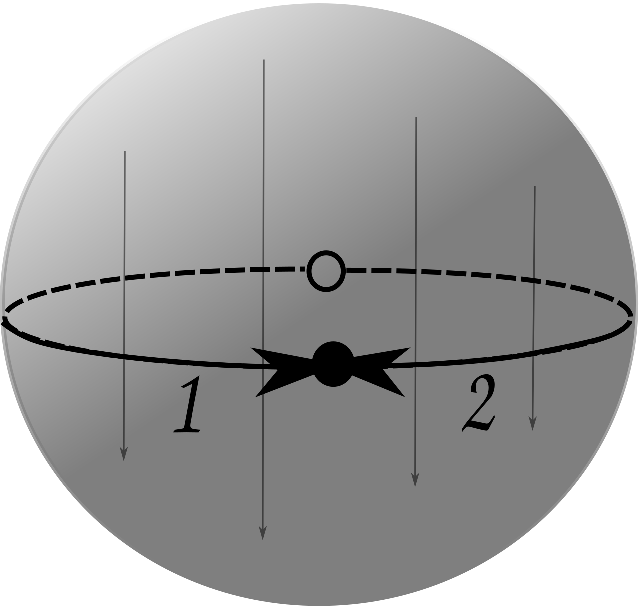}
\caption{Paving $P_1$ with $2$ darts produced by face-glueing. The face identification $(x,y,z) \mapsto (x,y,-z)$ is depicted by arrows.}\label{fig:pav-2-1}
\end{figure}

Let $P = \langle D; \alpha, \beta, \gamma\rangle$ be a paving. For the case of two darts $D = \{ 1, 2 \}$ we obtain only one paving $P_1$ with
\begin{equation}\label{eqn:pav-2-1}
( \alpha, \beta, \gamma ) \longmapsto ( (1,2), (1,2), (1,2) ).
\end{equation}

\begin{figure}[h]
\includegraphics[scale=.42]{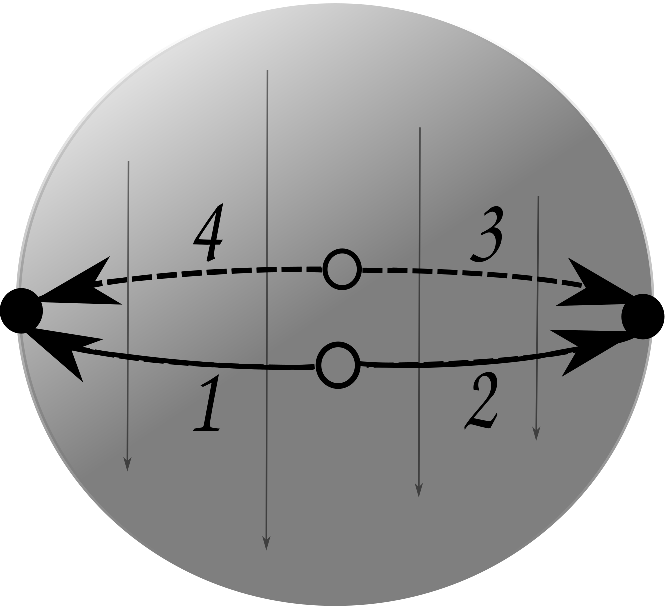}
\caption{Paving $P_2$ with $4$ darts produced by face-glueing. The face identification $(x,y,z) \mapsto (x,y,-z)$ is depicted by arrows.}\label{fig:pav-4-1}
\end{figure}

This paving is glued from a single $3$-ball $B_1$ with a map $H_1$ on it, as shown in Figure~\ref{fig:pav-2-1}. If we suppose that $B_1$ is a unit ball centred at the origin of $\mathbb{R}^3$, then the identification of the faces of $H_1$ can be described by the transformation  $(x,y,z) \mapsto (x,y,-z)$. This paving has $\mathrm{f}$-vector $(1,1,1,1)$.

For the case of four darts, that is, $D = \{ 1, 2, 3, 4 \}$, we get four more pavings.

The first one is $P_2$ with
\begin{equation}\label{eqn:pav-4-1}
( \alpha, \beta, \gamma ) \longmapsto ( (1,2)(3,4), (1,2)(3,4), (1,3)(2,4) ).
\end{equation}

Here, $P_2$ is topologically represented by glueing the boundary of a $3$-ball $B_2$ with a map $H_2$ on it, as depicted in Figure~\ref{fig:pav-4-1}. Again, such a glueing can be described by the transformation $(x,y,z) \mapsto (x,y,-z)$. This paving has  $\mathrm{f}$-vector $(2,2,1,1)$

The next paving $P_3$ has
\begin{equation}\label{eqn:pav-4-2}
( \alpha, \beta, \gamma ) \longmapsto ( (1,2)(3,4), (1,3)(2,4), (1,2)(3,4) ).
\end{equation}

\begin{figure}[h]
\includegraphics[scale=.42]{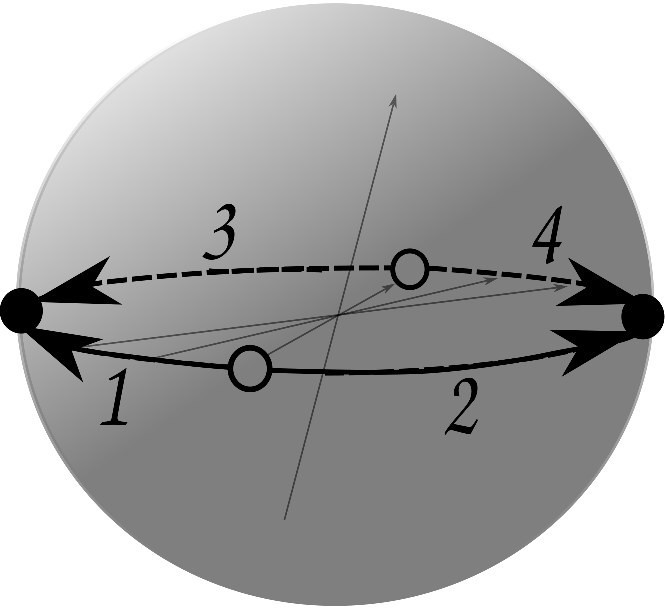}
\caption{Paving $P_3$ with $4$ darts produced by face-glueing. The face identification $(x,y,z) \mapsto (-x,-y,-z)$ is depicted by arrows.}\label{fig:pav-4-2}
\end{figure}

It is depicted in Figure~\ref{fig:pav-4-2}, and topologically is a single $3$-ball $B_3$ with a map $H_3$ on it, whose faces are identified accordingly. The glueing transformation in this case can be described as $(x,y,z) \mapsto (-x,-y,-z)$.  This paving has $\mathrm{f}$-vector $(1,1,1,1)$.

An easy computation yields that each of $P_i$, $i=1,2,3$, has Euler characteristic $\chi(P_i) = 0$, as any three-dimensional manifold \cite[Theorem~4.3]{FM}, and it can be readily seen that $P_1$ and $P_2$ are homeomorphic to the three-sphere $\mathbb{S}^3$, while $P_3$ is homeomorphic to the real projective space $\mathbb{R}P^3$.

As for the remaining two pavings $P_4$ and $P_5$, both of them correspond topologically to glueing two disjoint balls along their boundaries, and the Euler characteristic for both is $0$; thus each is a manifold by \cite[Theorem~4.3]{FM}. Moreover, each is an orientable manifold of Heegaard genus zero, and thus again homeomorphic to $\mathbb{S}^3$ \cite[Ch. 5, \S 1]{FM}.

\begin{figure}[h]
\includegraphics[scale=.42]{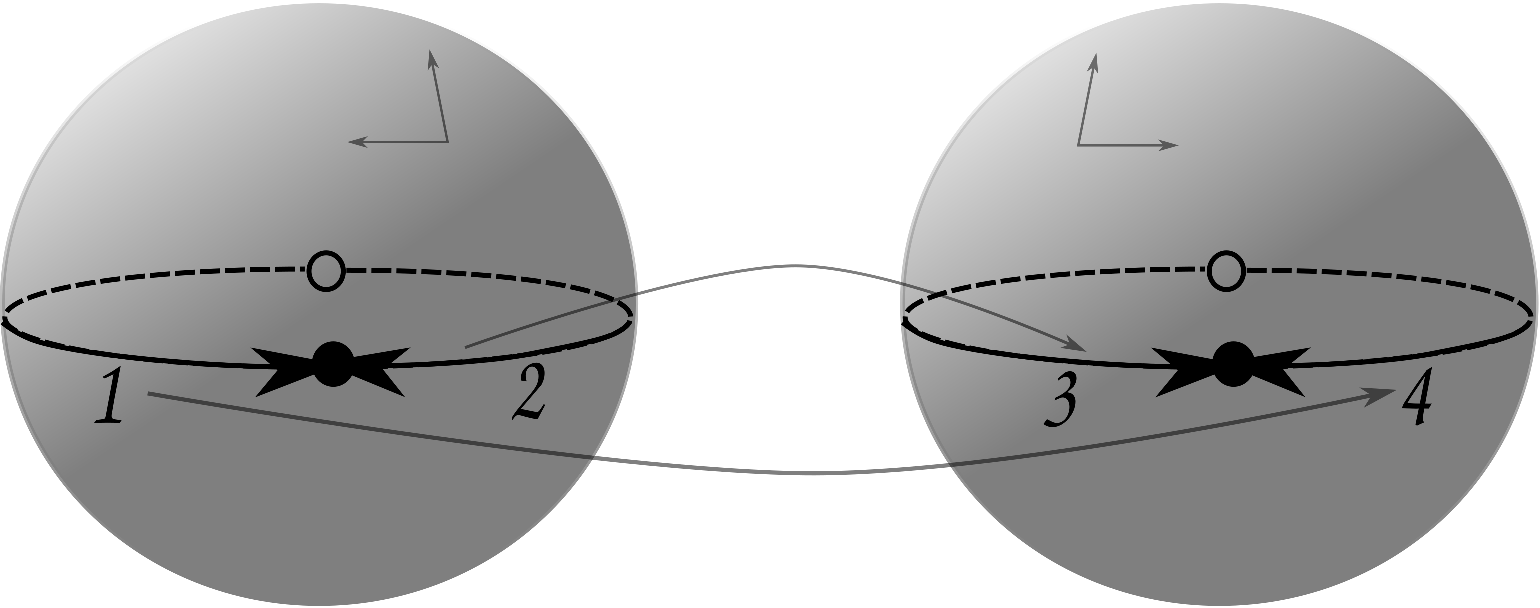}
\caption{Paving $P_4$ with $4$ darts produced by face-glueing. The face identification is depicted by arrows.}\label{fig:pav-4-3}
\end{figure}

For $P_4$ we have
\begin{equation}\label{eqn:pav-4-3}
( \alpha, \beta, \gamma ) \longmapsto ( (1,2)(3,4), (1,3)(2,4), (1,3)(2,4) ), 
\end{equation}
which is a combinatorial description for the two $3$-balls $B_{4,1}$ and $B_{4,2}$ shown in Figure~\ref{fig:pav-4-3}, each with a connected map $H_{4,1}$, respectively $H_{4,2}$, on it. The faces of those maps are identified by an orientation-reversing transformation on $\partial B_{4,1} \cong \mathbb{S}^2 \cong \partial B_{4,2}$. This paving has $\mathrm{f}$-vector $(1,1,2,2)$.

\begin{figure}[h]
\includegraphics[scale=.42]{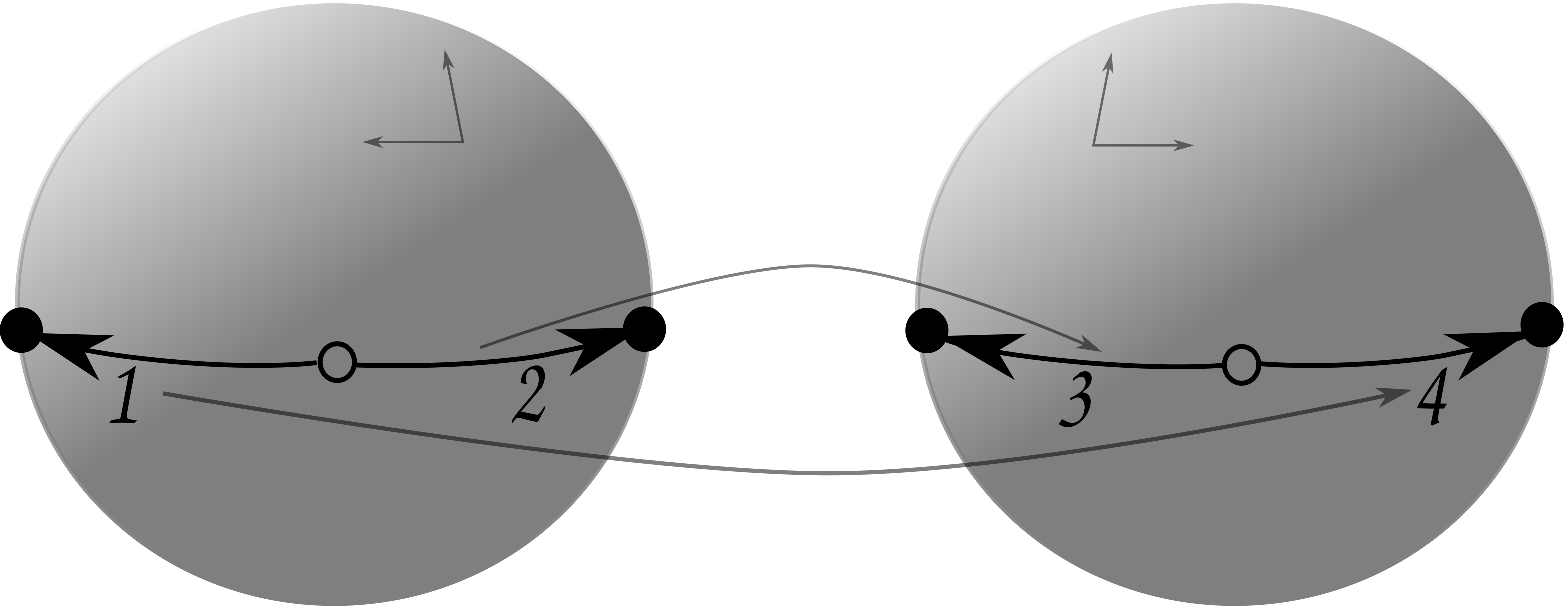}
\caption{Paving $P_5$ with $4$ darts produced by face-glueing. The face identification is depicted by arrows.}\label{fig:pav-4-4}
\end{figure}

Finally, for $P_5$ we obtain
\begin{equation}\label{eqn:pav-4-4}
( \alpha, \beta, \gamma ) \longmapsto ( (1,2)(3,4), (1,3)(2,4), (1,4)(2,3) ). 
\end{equation}
In this case two $3$-balls $B_{5,1}$ and $B_{5,2}$ shown in Figure~\ref{fig:pav-4-4} are identified along their boundaries. The identification is described by the glueing of the faces of the corresponding maps $H_{5,1}$ and $H_{5,2}$ on their boundaries. The $\mathrm{f}$-vector of this paving is $(2,1,1,2)$.
\end{Ex}

\section{Counting pavings of the three-sphere}\label{sec:counting}

Let us consider a Heegaard splitting $H \cup H' = \mathbb{S}^3$ of the three-sphere $\mathbb{S}^3$, where the handlebodies $H$ and $H'$ are glued along their common boundary $\Sigma = H \cap H'$. If we suppose that $\Sigma$ has a map on it, then such a splitting $H \cup H'$ turns into a paving. Indeed, we can split each edge on $\Sigma$ into two darts, and then double each dart, such that we have two maps $\Sigma$ and $\Sigma'$ corresponding to the boundaries of $H$ and $H'$; then we can write down the permutation representation for each of them. Finally we write down a permutation that pairs the darts of $\Sigma$ with those of $\Sigma'$: whichever map we choose for $\Sigma$ will determine the map on $\Sigma'$.

We can also think of $\mathbb{S}^3$ as $\mathbb{E}^3 \cup \infty$ and then delete from $\mathbb{E}^3$ a genus $g$ handlebody $H$. Then the closure $H'$ of the complement $\mathbb{S}^3 \setminus H$ will be a genus $g$ handlebody $H'$, and the surfaces of $H$ and $H'$ will have opposite orientations. 
Thus, if we choose a map $\Sigma$ on a genus $g$ surface of a handlebody $H$, we automatically imprint its chiral (i.e. having inverse orientation) counterpart $\Sigma'$ on the surface of $H'$, so we created a paving $P$ with underlying map $H_P = \Sigma \sqcup \Sigma'$. 

If two pavings are isomorphic, then their underlying maps are necessarily isomorphic. By the above construction, we have at least as many non-isomorphic oriented pavings $P$ on $2n$ darts representing $\mathbb{S}^3$ as the total number of non-isomorphic oriented maps $H$ on $n$ darts. Thus, the number of pavings representing $\mathbb{S}^3$ grows super-exponentially with respect to $n$. 

We remark that the complexity of our paving $P$ can be easily computed. If $\mathrm{f}(P) = (f_0, f_1, f_2, f_3)$ then $\chi(H) = f_2 - f_1 + f_0 = 2 - 2g$, where $g$ is the genus of the surface carrying the map $H$, and $f_3 = 2$. Thus $\mathrm{c}(P) = f_3 - f_2 + f_1 - f_0 = 2 - (2 - 2g) = 2g$, and its value will vary over the set of maps on $n$ darts. This fact motivates the following questions.

\begin{Que}
Let $\mathcal{P}_c(n)$ be the set of pavings with $n$ darts, all of fixed complexity $c$ (although not necessarily of a fixed homeomorphism type). Is it true that $\mathrm{card}\, \mathcal{P}_c(n) \sim C_1 \, \exp(C_2 n)$ for some $C_1$, $C_2 > 0$, if $n$ is great enough?
\end{Que}   

\begin{Que}
Let $\mathcal{P}_M(n)$ be the set of pavings with $n$ darts, all homeomorphic to a given manifold (or a cell complex) $M$. Is it true that $\mathrm{card}\, \mathcal{P}_M(n) \sim C_1 \, \exp(C_2 n)$ for some $C_1$, $C_2 > 0$, if $n$ is great enough?
\end{Que} 

\begin{table}[ht]
\begin{tabular}{|l|l|l|l|l|}
\hline
Darts & Maps   & Manifolds & With boundary & Genera of boundary components                                                                                                                                                                                                                                                                                                                                   \\ \hline
2     & 1      & 1         &               &                                                                                                                                                                                                                                                                                                                                                                 \\ \hline
4     & 4      & 4         &               &                                                                                                                                                                                                                                                                                                                                                                 \\ \hline
6     & 11     & 10        & 1             & {[}1{]} x 1                                                                                                                                                                                                                                                                                                                                                     \\ \hline
8     & 60     & 44        & 16            & \begin{tabular}[c]{@{}l@{}}{[}1{]} x 15 \\ {[}1,1{]} x 1\end{tabular}                                                                                                                                                                                                                                                             \\ \hline
10    & 318    & 183       & 135           & \begin{tabular}[c]{@{}l@{}}{[}1{]} x 108 \\ {[}2{]} x 27\end{tabular}                                                                                                                                                                                                                                                             \\ \hline
12    & 2806   & 1141      & 1665          & \begin{tabular}[c]{@{}l@{}}{[}1{]} x 1168 \\ {[}1,1{]} x 30 \\ {[}2{]} x 467\end{tabular}                                                                                                                                                                                                           \\ \hline
14    & 29359  & 7992      & 21367         & \begin{tabular}[c]{@{}l@{}}{[}1{]} x 11886 \\ {[}1,1{]} x 57 \\ {[}2{]} x 8161 \\ {[}3{]} x 1263\end{tabular}                                                                                                                                                         \\ \hline
16    & 396196 & 66616     & 329580        & \begin{tabular}[c]{@{}l@{}}{[}1{]} x 140507 \\ {[}1,1{]} x 2149 \\ {[}1,1,1{]} x 4 \\ {[}1,2{]} x 247 \\ {[}2{]} x 136833 \\ {[}2,2{]} x 40 \\ {[}3{]} x 49800\end{tabular} \\ \hline
\end{tabular}
\caption{Statistical data on maps with $n \leq 16$ darts: for each $n$ the number of manifold pavings and manifolds with boundary (pavings with non-spherical links) is given. For manifolds with boundary, $[g_1, g_2, \ldots, g_k]\, \text{x}\, m$ indicates that there are $m$ pavings giving rise to a manifold with boundary components of genera $g_1, g_2, \ldots, g_k$. The pavings are classified up to combinatorial isomorphism, and \textit{not} up to homeomorphism of the respective manifolds.}\label{tab:stats}
\end{table}

\section{Statistics for pavings on $n \leq 16$ darts}\label{sec:stats}

We created a \texttt{Rust} code called \texttt{Nem} \cite{Nem} in order to perform recursive enumeration of all pairwise non-isomorphic Schreier graphs of index $\leq 16$ free subgroups in $\Delta^+ = \mathbb{Z}_2*\mathbb{Z}_2*\mathbb{Z}_2$.   

Since Schreier graphs of index $2n$ free subgroups $H < \Delta^+$ have relatively easy combinatorial structure (they are rooted edge-$3$-coloured trivalent multi-graphs on $2n$ vertices), and their (unrooted) isomorphism can be verified in at most $O(n^2)$ time by an easy trial-and-error algorithm, our approach is more efficient than a direct attempt at classifying pavings as $3$-dimensional objects. 

Some statistical information produced by \texttt{Nem} for pavings on $n\leq 16$ darts is available in Table~\ref{tab:stats}, and the complete output of \texttt{Nem} can be downloaded together with its source code \cite{Nem}. For our computation we used the ``Cervino'' computational cluster with $64$ CPU cores and  $126$ Gb of RAM courtesy of the University of Neuch\^atel.

\vspace*{0.15in}

\begin{table}[!hb]
\centering
\begin{tabular}{lll}
\begin{tabular}[c]{@{}l@{}}\it R\'emi Bottinelli \\ \it Institut de Math\'{e}matiques\\ \it Universit\'{e} de Neuch\^{a}tel\\ \it Rue Emile-Argand 11\\ \it CH-2000 Neuch\^{a}tel\\ \it Suisse/Switzerland\\ \it remi.bottinelli@unine.ch\end{tabular} & \begin{tabular}[c]{@{}l@{}} \it Laura Ciobanu\\ \it School of Mathematical\\ \it and Computer Sciences\\ \it Heriot-Watt University\\ \it 6100 Main Street\\ \it Edinburgh EH14 4AS, UK\\ \it l.ciobanu@hw.ac.uk\end{tabular} & \begin{tabular}[c]{@{}l@{}}\it Alexander Kolpakov\\ \it Institut de Math\'{e}matiques\\ \it Universit\'{e} de Neuch\^{a}tel\\ \it Rue Emile-Argand 11\\ \it CH-2000 Neuch\^{a}tel\\ \it Suisse/Switzerland\\ \it kolpakov.alexander@gmail.com\end{tabular}
\end{tabular}
\end{table}


\begin{thebibliography}{99}

\bibitem{AB}{D. Arqu\`{e}s, J.-F. B\'{e}raud}, {``Rooted maps on orientable surfaces, Riccati's equation and continued fractions''}, Discrete Math. \textbf{215}, 1--12 (2000). 

\bibitem{AK}{D. Arqu\`es, P. Koch},{``Pavages tridimensionels''}, Bigre+Globule, \textbf{61}--\textbf{62}, 5--15 (1989).

\bibitem{BPR}{H. Baik, B. Petri, J. Raimbault}, {``Subgroup growth of virtually cyclic right-angled Coxeter groups and their free products''}, Combinatorica \textbf{39}, 779--811 (2019).

\bibitem{Bender}{E.~A. Bender}, {``An asymptotic expansion for the coefficients of some formal power series''}, J. London Math. Soc. \textbf{9}, 451--458 (1975).

\bibitem{Bergeron-et-al}{F. Bergeron, G. Labelle, P. Leroux}, {``Th\'{e}orie des esp\`{e}ces et combinatoire des structures arborescentes''}, Publications du LaCIM, Universit\'{e} du Qu\'{e}bec \`{a} Montr\'{e}al, 1994.

\bibitem{Monty}{L. Ciobanu, A. Kolpakov}, {``Monty: a SageMath worksheet''}, GitHub:\, \url{https://github.com/sashakolpakov/monty-3d}

\bibitem{Nem}{R. Bottinelli}, {``Nem: Na\"ive Enumeration of Maps''}, GitHub:\, \url{https://github.com/bottine/nem}

\bibitem{Breda-Mednykh-Nedela}{A. Breda, A. Mednykh, R. Nedela}, {``Enumeration of maps regardless of genus. Geometric approach''}, Discrete Math. \textbf{310}, 1184--1203 (2010).

\bibitem{CK-2017}{L. Ciobanu, A. Kolpakov}, {``Free subgroups of free products and combinatorial hypermaps''}, Discrete Math. \textbf{342}, 1415 -- 1433 (2019).

\bibitem{DLMF}{\texttt{DLMF}}, {Digital Library of Mathematical Functions}, \url{http://dlmf.nist.gov/}

\bibitem{Drmota-Nedela}{M. Drmota, R. Nedela}, {``Asymptotic enumeration of reversible maps regardless of genus''}, Ars Mathematica Contemporanea \textbf{5} (1), 77--97 (2012).

\bibitem{Flajolet-Sedgewick}{P. Flajolet, R. Sedgewick}, {``Analytic combinatorics''}, Cambridge University Press: Cambridge, 2009.

\bibitem{FM}{A. T. Fomenko, S. V. Matveev}, {``Algorithmic and Computer Methods for Three-Manifolds''}, Kluwer: Dordrecht, 1997.

\bibitem{GAP}{\texttt{GAP}}, {GAP - Groups, Algorithms, Programming}, \url{http://www.gap-system.org/}

\bibitem{Gurau}{R. Gurau}, {``Random tensors''}, Oxford University Press: Oxford, New York, 2017. 

\bibitem{Jones-Singerman}{G.~A. Jones, D. Singerman}, {``Theory of maps on orientable surfaces''}, Proc. London Math. Soc. \textbf{37} (3), 273--307  (1978).

\bibitem{Joyal}{A. Joyal}, {``Une th\'{e}orie combinatoire des s\'{e}ries formelles''}, Adv. Math. \textbf{42} (1), 1--82 (1981).

\bibitem{Klazar}{M. Klazar}, {``Irreducible and connected permutations''}, IUUK-CE-ITI pre-print series (2003); available \href{http://iti.mff.cuni.cz/series/2003.html}{on-line}

\bibitem{Lienhardt}{P. Lienhardt}, {``Topological models for boundary representation: a comparison with $n$--dimensional generalized maps''}, Computer-Aided Design \textbf{23} (1), 59--82 (1991).

\bibitem{LS} {A. Lubotzky, D. Segal}, {``Subgroup Growth''}, Progress in Mathematics \textbf{212}, Birkh\"{a}user Verlag: Basel, 2003.

\bibitem{Martin} {R.J. Martin, M.J. Kearney}, {``An exactly solvable self-convolutive recurrence''}, Aequationes Math. \textbf{80} (3), 291--318 (2010).

\bibitem{M}{A. Mednykh}, {``Enumeration of unrooted hypermaps''}, Electron. Notes Discrete Math. \textbf{28},  207--214 (2007).

\bibitem{M-j-algebra}{A. Mednykh}, {``Counting conjugacy classes of subgroups in a finitely generated group''}, J. Algebra \textbf{320} (6), 2209--2217 (2008).

\bibitem{MN1}{A. Mednykh, R. Nedela}, {``Enumeration of unrooted maps of a given genus''}, J.~Combin. Theory, Ser.~B \textbf{96} (5), 706--729 (2006).

\bibitem{MN2}{A. Mednykh, R. Nedela}, {``Enumeration of unrooted hypermaps of a given genus''}, Discrete Math. \textbf{310} (3), 518--526 (2010).

\bibitem{Mul91}{T.~W. M\"{u}ller}, {``Combinatorial aspects of finitely generated virtually free groups''}, J. London Math. Soc. \textbf{44} (2), 75--94 (1991). 

\bibitem{Mul96}{T.~W. M\"{u}ller}, {``Subgroup growth of free products''}, Invent. Math. \textbf{126} (1),  111--131 (1996).

\bibitem{MSP1}{T.~W. M\"{u}ller, J.-C. Schlage-Puchta}, {``Classification and statistics of finite index subgroups in free products''}, Adv. Math. \textbf{188} (1), 1--50 (2004).

\bibitem{MSP2}{T.~W. M\"{u}ller, J.-C. Schlage-Puchta}, {``Character theory of symmetric groups, subgroup growth of Fuchsian groups, and random walks''}, Adv. Math. \textbf{213} (2), 919--982 (2007).

\bibitem{MSP3}{T.~W. M\"{u}ller, J.-C. Schlage-Puchta}, {``Statistics of isomorphism types in free products''}, Adv. Math. \textbf{224} (2), 707--730 (2010). 

\bibitem{Odlyzhko}{A.~M. Odlyzko}, {``Asymptotic enumeration methods''}, in: R. L. Graham, M. Gr\"{o}tschel, and L. Lov\'{a}sz (eds.), Handbook of Combinatorics, vol. 2, Elsevier, 1995, 1063--1229.

\bibitem{Petitot-Vidal}{M. Petitot, S. Vidal}, {``Counting Rooted and Unrooted Triangular Maps''}, J. Nonlin. Systems Applications \textbf{1} (2), 51--57 (2010).

\bibitem{Zeilberger-et-al}{M. Petkovsek, H.~S. Wilf, D. Zeilberger}, {``A = B (with foreword by D.E. Knuth)''}, A.~K. Peters: Wellesley MA, 1996.

\bibitem{Ratcliffe}{J. G. Ratcliffe}, {``Foundations of Hyperbolic Manifolds''}, 2nd Edition, Springer: Graduate Texts in Mathematics \textbf{149}

\bibitem{Read}{R.~C. Read}, {``The Enumeration of Locally Restricted Graphs (I)''}, J. London Math. Soc. \textbf{34} (4), 417--436 (1959).

\bibitem{SageMath}{\texttt{SageMath}}, {the Sage Mathematics Software System (version 9.1)}, \url{https://www.sagemath.org}

\bibitem{OEIS}{N.~J.~A. Sloane et al.}, {``The On-line Encyclopaedia of Integer Sequences''}, \texttt{http://oeis.org}

\bibitem{Spehner}{J.-C. Spehner}, {``Merging in maps and in pavings''}, Theoret. Comput. Sci. \textbf{86}, 205--232 (1991).

\bibitem{Stothers-modular}{W.W. Stothers}, {``The number of subgroups of given index in the modular group''}, Proc.~Roy.~Soc.~Edinburgh, \textbf{78a}, 105--112 (1978).

\bibitem{Stothers}{W.W. Stothers}, {``Free Subgroups of Free Products of Cyclic Groups''}, Math. Comput. \textbf{32} (144), 1274--1280 (1978).

\bibitem{Thurston-notes}{W.~P. Thurston}, {``The geometry and topology of three-manifolds''}, Princeton lecture notes (1978--1981).

\bibitem{Vidal}{S. Vidal}, {``Sur la classification et le denombrement des sous-groupes du groupe modulaire et de leurs classes de conjugaison''}, \texttt{arXiv:math/0702223}. 

\end{thebibliography}
\end{document}